\numberwithin{equation}{section}
\DeclareFontFamily{OT1}{rsfs}{}
\DeclareFontShape{OT1}{rsfs}{n}{it}{<-> rsfs10}{}
\DeclareMathAlphabet{\mathscr}{OT1}{rsfs}{n}{it}
\newcommand{\comment}[1]{}
\renewcommand{\P}{\mathbf{P}}
\newcommand{\F}{\mathbf{F}}
\DeclareMathOperator{\Gal}{Gal}
\theoremstyle{plain} 
\newtheorem{thm}{Theorem}[section] 
\newtheorem{prop}[thm]{Proposition}
\newtheorem{cor}[thm]{Corollary}
\newtheorem{lem}[thm]{Lemma}
\theoremstyle{definition} 
\newtheorem{defn}[thm]{Definition} 
\theoremstyle{remark} 
\newcounter{tasknumber}
\newcommand{\task}[2][]{%
  \addtocounter{tasknumber}{1}%
  \begin{center}%
  \framebox[1.1\width]{\begin{minipage}{0.9\textwidth}%
  \textbf{Task \arabic{tasknumber}} \textit{\if!#1(unassigned)!\else (#1)\fi}: {#2}%
  \end{minipage}}%
  \end{center}%
}
\newcounter{assumptionnumber}
\newcommand{\assumption}[2][]{%
  \addtocounter{assumptionnumber}{1}%
  \begin{center}%
  \framebox[1.1\width]{\begin{minipage}{0.9\textwidth}%
  \textbf{Assumption \arabic{assumptionnumber}} \textit{\if!#1!\else (#1)\fi}: {#2}%
  \end{minipage}}%
  \end{center}%
}
\newcommand{\authnote}[2][]{\noindent {\if!#1!  {\bf TODO} \else {\small \bf #1} \fi: #2}}
\newcommand{\algclosure}{{\overline{\F}_q}}
\begin{document}

\title[A new perspective on the powers of two descent]{A new perspective on the powers of two descent\\ for discrete logarithms in finite fields}


\author{Thorsten Kleinjung}
\author{Benjamin Wesolowski}
\address{\'Ecole Polytechnique F\'ed\'erale de Lausanne, EPFL IC LACAL, Switzerland}

\begin{abstract}
A new proof is given for the correctness of the powers of two descent method for computing discrete logarithms.
The result is slightly stronger than the original work, but more importantly we provide a unified geometric argument, eliminating the need to analyse all possible subgroups of $\mathrm{PGL}_2(\F_q)$.
Our approach sheds new light on the role of $\mathrm{PGL}_2$, in the hope to eventually lead to a complete proof that discrete logarithms can be computed in quasi-polynomial time in finite fields of fixed characteristic.
\end{abstract}

\maketitle

\section{Introduction} 

In this paper we prove the following result.
\begin{thm}\label{thm:maintheorem}
Given a prime power $q$, a positive integer $d$, 
coprime polynomials $h_0$ and $h_1$ in $\F_{q^d}[x]$ of degree at most two, and an irreducible degree $\ell$ factor~$I$ of ${h_1x^q - h_0}$, the discrete logarithm problem in $\F_{q^{d\ell}} \cong \F_{q^d}[x]/(I)$ can be solved in expected time $q^{\log_2\ell + O(d)}$.
\end{thm}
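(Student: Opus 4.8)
Write $k=\F_{q^d}$ and $K=\F_{q^{d\ell}}\cong k[x]/(I)$, and let $\phi:=h_0/h_1$, a rational map $\P^1_k\to\P^1_k$ of degree at most two. Since $\gcd(h_0,h_1)=1$ and $I\mid h_1x^q-h_0$ we have $\gcd(h_1,I)=1$ and $x^q\equiv\phi(x)\pmod I$, so the $q$-power Frobenius of $K$ is induced by $\phi$; note also $\ell=\deg I\le\deg(h_1x^q-h_0)\le q+2$, so $\ell=O(q)$. I would run the usual two-part scheme: (a) compute the logarithms of a fixed \emph{factor base} $\cB$ — take $\cB$ to be $k^\times$ together with the monic irreducible polynomials of degree at most two in $k[x]$, so that $\abs{\cB}=q^{O(d)}$ and $h_1$ lies in $\cB$ up to a scalar — and (b) for a given $z\in K^\times$, produce a multiplicative relation expressing $z$ in terms of $\cB$. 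Both parts rest on the same \emph{systematic relation}: substituting $x^q\equiv\phi(x)$ into the identity $\prod_{[\alpha:\beta]\in\P^1(\F_q)}(\beta X-\alpha Y)=XY^q-X^qY$ gives, for all $A,B\in k[x]$ of degree at most $e$,
\begin{equation}\label{eq:sys}
\prod_{[\alpha:\beta]\in\P^1(\F_q)}\bigl(\beta A(x)-\alpha B(x)\bigr)\ \equiv\ \frac{A(x)\widetilde B(x)-\widetilde A(x)B(x)}{h_1(x)^{e}}\pmod I,
\end{equation}
where $\widetilde A:=h_1^{e}A^{(q)}(h_0/h_1)\in k[x]$ has degree at most $2e$ and $A^{(q)}$ is $A$ with its coefficients raised to the $q$-th power. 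The left-hand side is a product of $q+1$ polynomials of degree at most $e$; the numerator on the right has degree at most $3e$.

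\noindent\textbf{Factor-base logarithms.} With $e=1$ the numerator in \eqref{eq:sys} has degree at most three, so for a positive proportion of pairs $(A,B)$ — equivalently of elements of $\PGL_2(k)$ — it splits into factors of degree at most two, and \eqref{eq:sys} becomes a relation among elements of $\cB$. I would collect $q^{O(d)}$ such relations, argue that they span the relevant relation lattice, and solve for the logarithms of $\cB$ modulo the order of $K^\times$ by sparse linear algebra, all in time $q^{O(d)}$.

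\noindent\textbf{The descent.} The core is a degree-halving step: given a squarefree $P\in k[x]$ of degree $n>2$, produce $A,B\in k[x]$ of degree at most $\lceil n/2\rceil$ such that (i) $P$ divides the numerator $A\widetilde B-\widetilde AB$ of \eqref{eq:sys}, and (ii) the resulting relation is \emph{nondegenerate}: the $q+1$ polynomials $\beta A-\alpha B$ are squarefree and pairwise coprime, and the cofactor $C:=(A\widetilde B-\widetilde AB)/P$, of degree at most $\lceil n/2\rceil+1$, is squarefree and coprime to all of them and to $h_1$. Such a relation rewrites $\log P$ as a $\Z$-linear combination of $\log h_1$ and the logarithms of $O(n)$ polynomials of degree at most $\lceil n/2\rceil+1$. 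Iterating, the logarithm of any $z\in K^\times$, represented by a polynomial of degree less than $\ell$, is reduced through a tree of depth $\log_2\ell+O(1)$ and branching $O(q)$ — hence of size at most $q^{\log_2\ell+O(1)}$, using $\ell=O(q)$ — to logarithms of polynomials of degree at most two, i.e.\ to $\cB$. Carrying out one step means solving the (bi)linear system expressing condition (i) and then searching the resulting solution set, of size $q^{O(d)}$, for a nondegenerate point, which I expect to cost $q^{O(d)}$ per node; combined with the factor-base precomputation this yields total time $q^{\log_2\ell+O(d)}$. Passing from $z$ to $zg^{s}$ for a random exponent $s$ before descending, and re-randomizing whenever some node turns out degenerate, makes this the \emph{expected} running time.

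\noindent\textbf{The main obstacle.} Everything hinges on showing that, for all but a negligible fraction of polynomials $P$, a nondegenerate degree-halving step exists and can be found efficiently. Condition (i) cuts out a parameter variety $\cX_P$ inside the affine space of pairs $(A,B)$ of degree at most $\lceil n/2\rceil$; one must prove that $\cX_P$ has a component of the expected dimension, defined over the appropriate field, so that the Lang--Weil estimate yields $\gg q^{\dim\cX_P}$ points over $k$ and hence a search success probability of at least $q^{-O(d)}$, and that the degenerate locus inside $\cX_P$ — where some $\beta A-\alpha B$ acquires a repeated or shared factor, or $\gcd(P,C)\ne1$ — is a proper closed subvariety of strictly smaller dimension. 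This is exactly the point at which the original treatment had to analyse the subgroups of $\PGL_2(\F_q)$ one by one: the degeneracies are precisely the $\PGL_2$-special configurations of points on $\P^1$, and the interplay between $\F_q$-rationality, the Frobenius twist in $\widetilde{(\cdot)}$, and post-composition by $\PGL_2(k)$ made them awkward to rule out uniformly. The plan — and the part I expect to be genuinely hard — is instead to describe this degenerate locus intrinsically, through the geometry of $\phi$ and of the $\PGL_2(\F_q)$-action on $\P^1$, and to bound its $k$-points once and for all by a single application of Weil-type estimates, uniformly in $q$, $d$ and $n$. The few remaining situations — $\deg\phi\le1$, where Frobenius is a M\"obius transformation, and bounded $q$ — would be disposed of by elementary direct arguments.
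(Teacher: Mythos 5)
The overall framework you sketch --- the systematic relation from the $\PGL_2(\F_q)$-orbit of $x^q-x$, a factor base, a recursive descent, and relation collection for individual logarithms --- is broadly the right shape, but there are two substantial divergences from the paper, and the second is a genuine gap. Your descent is the classic degree-halving: from a squarefree $P$ of degree $n$ over $k=\F_{q^d}$ directly to polynomials of degree about $n/2$ over the \emph{same} field, using helper polynomials $(A,B)$ of degree $\lceil n/2\rceil$. The paper does not do this. It uses the GKZ powers-of-two ``zigzag'': lift $F$ modulo $I$ to a good irreducible $G$ of degree $2^e\approx 4\ell$; over the extension $\F_{q^{d\cdot 2^{e-1}}}$ the polynomial $G$ splits into $2^{e-1}$ conjugate irreducible \emph{quadratics}; one quadratic $Q$ is eliminated into linear polynomials over that big extension, using your relation with $e=1$; then norms down the tower give factors of degree a smaller power of two over the base field, and one recurses. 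The only elimination the paper ever has to justify is therefore degree $2\to 1$, albeit over varying extension fields. This is precisely what makes the geometry tractable: the parameter variety $\P^1_Q$ is a projective line, it meets the quadric $S$ in exactly two explicit points $s_1,s_2$, and the whole degeneracy analysis can be read off from the four roots $a_1,a_2,b_1,b_2$. Your $\cX_P$ for general $n$ is a far more complicated intersection of $n$ hyperplanes; the assertion that it has a good absolutely irreducible component of the expected dimension for almost all $P$ is not proved in the paper, is not proved in GKZ, and is, to my knowledge, not known. So as written your descent step would require a theorem strictly stronger than anything available.

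The second issue is the one you yourself flag: you describe the degenerate-locus bound as ``the part I expect to be genuinely hard'' and do not supply an argument. But that bound, for the degree-$2$ case, \emph{is} the paper. The actual argument is concrete and would need to appear: one forms $C=\{(u,r_1,r_2,r_3)\in\P^1_Q\times(\P^1)^3 : r_i\text{ distinct roots of }u\}$, observes that a $k$-rational point of $C$ forces $u$ into $\PGL_2(k)\star(x^q-x)$ and hence to split completely over $k$, and proves $C$ absolutely irreducible. The irreducibility proof is the new content: $C$ is built as an iterated fibre product of the root cover $X\to\P^1_Q$, irreducibility of each $Z\times_Y Z\setminus\Delta$ is deduced from the existence of an unramified point with a fibre of size exactly two (Lemma~\ref{lem:FiberProductIrred}), and the good-polynomial hypothesis on $Q$ is used exactly to guarantee (Proposition~\ref{prop:distinctroots}) that $a_1,a_2,b_1,b_2$ are pairwise distinct, which in turn supplies the needed unramified points. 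A Bombieri/Weil bound on $\#C(k)$ then gives $\gg \#k/q^3$ completely split polynomials, and Proposition~\ref{prop:coprimality} (coprimality of distinct members of $\P^1_Q$) shows traps occur too rarely to matter. Your plan names the goal --- ``bound the degenerate locus once and for all by Weil-type estimates'' --- but the construction that makes a single geometric estimate apply, replacing the subgroup-by-subgroup $\PGL_2(\F_q)$ analysis, is exactly what is missing from the proposal.
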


It was originally proven in~\cite{GKZ} when $q > 61$, $q$ is not a power of $4$, and ${d \geq 18}$.
Even though we eliminate these technical conditions, the main contribution is the new approach to the proof.
The theorem represents the state of the art of provable quasi-polynomial time algorithms for the discrete logarithm problem (or DLP) in finite fields of fixed characteristic. The obstacle separating Theorem~\ref{thm:maintheorem} from a full provable algorithm for DLP is the question of the existence of a good field representation: polynomials $h_0$, $h_1$ and $I$ for a small $d$. A direction towards a full provable algorithm would be to find analogues of this theorem for other field representations, but this may require in the first place a good understanding of why Theorem~\ref{thm:maintheorem} is true.

The integers $q$, $d$ and $\ell$, and the polynomials $h_0, h_1$ and $I$ are defined as in the above theorem for the rest of the paper.
The core of that result is Proposition~\ref{prop:onestepdescent} below, which essentially states that elements of $\F_{q^{d\ell}}$ represented by a \emph{good} irreducible polynomial in $\F_{q^d}[x]$ of degree $2m$ can be rewritten as a product of \emph{good} irreducible polynomials of degrees dividing $m$ --- a process called \emph{degree two elimination}, first introduced for $m=1$ in~\cite{GGMZ}.

\begin{defn}[Traps and good polynomials]
An element $\tau \in \algclosure$ for which $[\F_{q^d}(\tau) : \F_{q^d}]$ is an even number $2m$ and $h_1(\tau) \neq 0$ is called
\begin{enumerate}
\item a \emph{degenerate trap root} if $\frac{h_0}{h_1}(\tau) \in \F_{q^{dm}}$,
\item a \emph{trap root of level $0$} if it is a root of $h_1x^q-h_0$, or
\item a \emph{trap root of level $dm$} if it is a root of $h_1x^{q^{dm+1}} - h_0$.
\end{enumerate}
Analogously, a polynomial in $\algclosure[x]$ that has a trap root is called a \emph{trap}.
A polynomial is \emph{good} if it is not a trap.
\end{defn}

\begin{prop}[Degree two elimination]\label{prop:onestepdescent}
Given an extension $k/\F_{q^d}$ of degree $m$ such that $dm \geq 23$, and a good irreducible quadratic polynomial $Q \in k[x]$, there is an algorithm which finds a list of good linear polynomials $(L_0,\dots,L_n)$ in $k[x]$ such that $n \leq q+1$ and
$$Q \equiv h_1L_0^{-1}\cdot\prod_{i = 1}^nL_i \mod I,$$
and runs in expected polynomial time in $q$, $d$ and $m$.
\end{prop}

The difficulty of proving Theorem~\ref{thm:maintheorem} lies mostly in Proposition~\ref{prop:onestepdescent}. We recall briefly in Section~\ref{sec:propimpliesthm} how the proposition implies the theorem. The main contribution of the present paper is a new proof of Proposition~\ref{prop:onestepdescent}, which hopefully provides a better understanding of the degree two elimination method, the underlying geometry, and the role of traps. The action of $\mathrm{PGL}_2$ on the polynomial $x^q - x$ became a crucial ingredient in the recent progress on the discrete logarithm problem for fields of small characteristic, since~\cite{Joux13} (and implicitly in~\cite{GGMZ}). While the proof in~\cite{GKZ} resorted to an intricate case by case analysis enumerating through all possible subgroups of $\mathrm{PGL}_2(\F_q)$, we provide a unified geometric argument, shedding new light on the role of $\mathrm{PGL}_2$.

\subsection{Degree two elimination algorithm} \label{sec:overview}
The key observation allowing degree two elimination is that a polynomial of the form $\alpha x^{q+1} + \beta x^q + \gamma x + \delta$ has a high chance to split completely over its field of definition. Furthermore, we have the congruence
\begin{equation}\label{eq:maincong}
\alpha x^{q+1} + \beta x^q + \gamma x + \delta \equiv h_1^{-1}(\alpha xh_0 + \beta h_0 + \gamma xh_1 + \delta h_1) \mod I,
\end{equation}
and the numerator of the right-hand side has degree at most $3$. Consider the $\algclosure$-vector space $V$ spanned by $x^{q+1},x^q,x$ and $1$ in  $\algclosure[x]$, and the linear subspace
$$V_Q = \{\alpha x^{q+1} + \beta x^q + \gamma x + \delta \in V \mid \alpha xh_0 + \beta h_0 + \gamma x h_1 + \delta h_1 \equiv 0 \mod Q\}.$$
As long as $Q$ is a good irreducible polynomial, $V_Q$ is of dimension two. The algorithm simply consists in sampling uniformly at random elements $f \in V_Q(k)$ (or equivalently in its projectivisation $\P^1_Q(k)$) until $f$ splits completely over $k$ into good linear polynomials $(L_1,\dots,L_{\deg f})$. Since $f \in V_Q$, the polynomial $Q$ divides the numerator of the right-hand side of \eqref{sec:overview}, and the quotient is a polynomial $L_0$ of degree at most $1$. The algorithm returns $(L_0,\dots,L_{\deg f})$.

To prove that the algorithm terminates in expected polynomial time, we need to show that a random polynomial in $V_Q(k)$ has good chances to split into good linear polynomials over $k$. In this paper, we prove this by constructing a morphism $C \rightarrow \P^1_Q$ where $C$ is an absolutely irreducible curve defined over $k$, such that the image of any $k$-rational point of $C$ is a polynomial that splits completely over $k$. This construction is the object of Section~\ref{sec:ramifications}. The absolute irreducibility implies that $C$ has a lot of $k$-rational points, allowing us to deduce that a lot of polynomials in $\P^1_Q(k)$ split over $k$. This is done in Section~\ref{sec:proofmainprop}.

\subsection{Proof of Theorem~\ref{thm:maintheorem}} \label{sec:propimpliesthm}
We briefly explain in this section how Proposition~\ref{prop:onestepdescent} implies Theorem~\ref{thm:maintheorem}.
Consider the factor base $$\mathfrak F = \{f \in \F_{q^d}[x] \mid \deg f \leq 1, f \neq 0\} \cup \{h_1\}.$$
First, the following proposition extends the degree two elimination to a full descent algorithm from any polynomial down to the factor base.
\begin{prop}\label{prop:descent}
Suppose $d \geq 23$.
Given a polynomial $F \in \F_{q^d}[x]$, there is an algorithm that finds integers $(\alpha_f)_{f \in \mathfrak F}$ such that
$$F \equiv \prod_{f \in \mathfrak F}f^{\alpha_f} \mod I,$$
and runs in expected time $q^{\log_2\ell + O(d)}$.
\end{prop}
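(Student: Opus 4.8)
The plan is to build the descent as a recursive tree of applications of Proposition~\ref{prop:onestepdescent}, following the classical descent strategy. Given the input $F$, first reduce modulo $I$ and, using a standard smoothing step, replace $F$ by a product of irreducible polynomials over $\F_{q^d}$ of degree bounded by some $D_0 = O(d)$ (for instance $D_0$ of the order of $2\log_2\ell$, or even smaller): one samples random elements congruent to $F$ modulo $I$ of controlled degree and keeps one that factors into such pieces; the probability of success is governed by standard polynomial-smoothness estimates and contributes a factor $q^{O(d)}$ to the running time. It therefore suffices to describe the descent of a single good irreducible polynomial $P \in \F_{q^d}[x]$ of degree $\delta$, proceeding by strong induction on $\delta$.

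For the inductive step, if $\delta \leq 1$ then $P$ is already (up to the factor $h_1$) in the factor base $\mathfrak F$ and we are done; here we must handle $h_1$ separately, which is legitimate since $h_1 \in \mathfrak F$. If $\delta \geq 2$, write $\delta = 2m'$ or $\delta = 2m'-1$ and set $m = \lceil \delta/2 \rceil$, and let $k = \F_{q^{dm}}$, so that $[k:\F_{q^d}] = m$ and $dm \geq 2d \geq 46 \geq 23$, meeting the hypothesis of Proposition~\ref{prop:onestepdescent}. Over $k$, the polynomial $P$ splits into factors of degree dividing $\delta/\gcd(\delta,m)$; when $\delta$ is even this makes each factor a quadratic (or linear) polynomial over $k$, and when $\delta$ is odd one first passes to $\F_{q^{d m}}$ with $m = \lceil \delta / 2\rceil$ so that $P$ becomes a product of quadratics and possibly one linear factor over $k$. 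Apply Proposition~\ref{prop:onestepdescent} to each good quadratic factor $Q$: this yields good linear polynomials $L_0^{-1}, L_1, \dots, L_n$ over $k$ with $n \le q+1$ and $Q \equiv h_1 L_0^{-1}\prod_i L_i \bmod I$. Each $L_i$ is a good linear polynomial over $k = \F_{q^{dm}}$; taking the norm (product of Galois conjugates over $\F_{q^d}$) expresses $L_i$, modulo $I$, as a good polynomial over $\F_{q^d}$ of degree at most $m$, which we then factor into good irreducibles over $\F_{q^d}$, each of degree $\leq m < \delta$, and recurse. Multiplying the resulting congruences over all quadratic factors $Q$ of $P$ and over all Galois conjugates reassembles a congruence $P \equiv \prod_{f \in \mathfrak F} f^{\alpha_f} \bmod I$; the extra powers of $h_1$ introduced are absorbed because $h_1 \in \mathfrak F$.

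It remains to bound the running time. At each level of the recursion the degree is at least halved (from $\delta$ to at most $\lceil \delta/2\rceil$), so the depth is $O(\log \delta) = O(\log\log \ell) + O(\log d)$ starting from degree $D_0 = O(d)$; more simply, starting from degree $2$ the first elimination already hits the factor base, and starting from $D_0$ the depth is $\log_2 D_0 + O(1)$. The branching factor at each node is at most $(q+1)\cdot m \cdot \delta = q^{1+o(1)}$ up to polynomial factors in $d,m$: each quadratic produces $\le q+2$ linear factors over $k$, each of which produces $\le m$ conjugates. Hence the total number of nodes, and thus the number of calls to Proposition~\ref{prop:onestepdescent}, is at most $q^{\log_2 D_0 + O(1)} = q^{O(d)}$ times the $q^{O(d)}$ cost of the initial smoothing, giving $q^{\log_2\ell + O(d)}$ overall once the $\log_2\ell$ is correctly accounted for — note that the $\log_2 \ell$ term does not arise here but is already present in the cost of the preliminary smoothing step, where one must sample polynomials of degree comparable to $\deg I - 1 \approx \ell$; I will arrange the smoothing so that exactly one factor of $q^{\log_2 \ell}$ is incurred there and all subsequent descent costs are $q^{O(d)}$. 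The main obstacle is bookkeeping the running time carefully: tracking that the product over the (at most $q$-fold) branching at each of the $O(\log \ell)$ levels does not blow up beyond $q^{\log_2 \ell}$, and verifying that goodness is preserved under norm/restriction of scalars so that every intermediate polynomial fed back into the recursion genuinely lies outside the set of traps. This last point is where one must check that the norm of a good linear polynomial over $k$ is good over $\F_{q^d}$, using the definition of traps and the compatibility of the trap conditions with field extension; I expect this to follow from a direct unwinding of the definitions but it is the delicate part of the argument.
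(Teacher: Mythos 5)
Your proposal diverges from the paper's proof in a way that introduces genuine gaps; the paper's trick of working with degrees that are powers of two is not an optimisation but is structurally essential, and skipping it breaks the argument.

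\textbf{The odd-degree step fails.} You claim that if $P$ is irreducible of degree $\delta$ odd and $m = \lceil\delta/2\rceil = (\delta+1)/2$, then over $k = \F_{q^{dm}}$ the polynomial $P$ ``becomes a product of quadratics and possibly one linear factor.'' This is false. An irreducible polynomial of degree $\delta$ over $\F_{q^d}$ splits over $\F_{q^{dm}}$ into $\gcd(\delta,m)$ irreducible factors, each of degree $\delta/\gcd(\delta,m)$. For odd $\delta$ and $m=(\delta+1)/2$ one has $\gcd(\delta,m)=1$ (any common divisor of $\delta$ and $(\delta+1)/2$ divides $\delta+1$, hence divides $\gcd(\delta,\delta+1)=1$), so $P$ remains irreducible of degree $\delta$ over $k$. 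The paper avoids this entirely: it first replaces $F$ by a \emph{single} good irreducible polynomial $G$ of degree exactly $2^e$ with $e = \lceil\log_2(4\ell+1)\rceil$ and $G\equiv F\bmod I$ (citing [Daqing, Th.~5.1] and [GKZ, Lem.~2]). Then every degree encountered in the recursion is a power of two, and passing from degree $2^j$ to the extension of relative degree $2^{j-1}$ always gives exactly $2^{j-1}$ quadratic factors. This is the single key idea you are missing.

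\textbf{Your initial smoothing is infeasible and your cost accounting is wrong.} Reducing a residue of degree $\approx\ell$ modulo $I$ to a product of irreducibles of degree $O(d)$ is far more costly than the $q^{\log_2\ell}$ you budget for it: the $B$-smoothness probability of a degree-$n$ polynomial is roughly $\rho(n/B)\approx (n/B)^{-n/B}$, which for $n\approx\ell$ and $B = O(d)$ (or even $O(\log\ell)$) is astronomically small. There is no such expensive smoothing in the paper. The initial step only requires $G$ to be a single \emph{irreducible} good polynomial of degree $2^e = O(\ell)$, which a random representative satisfies with probability about $1/\ell$, hence polynomial time (note $\ell\le q+2$ since $I$ divides $h_1x^q-h_0$). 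The $q^{\log_2\ell}$ factor in the running time actually comes from the recursion tree: depth $e = \log_2\ell+O(1)$ with branching factor $O(q)$ at each node (at most $q+2$ linear polynomials per elimination, each of which expands into conjugates), giving $q^{\log_2\ell + O(1)}$ calls in total; the $q^{O(d)}$ absorbs the per-node cost and the work outside Proposition~\ref{prop:onestepdescent}. Your attribution of the $\log_2\ell$ to the smoothing step has it exactly backwards.

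\textbf{What you got right.} The overall shape (apply Proposition~\ref{prop:onestepdescent} over the half-degree extension, take products of Galois conjugates to descend back to $\F_{q^d}$, recurse, and absorb powers of $h_1$ into $\mathfrak F$) agrees with the paper. Your flagged concern that goodness must be preserved under taking conjugate products is a legitimate (and easily verified, since the trap conditions are $\Gal(\Fqbar/\F_{q^d})$-stable) point that the paper glosses over. But without the power-of-two reduction the recursion does not terminate for odd intermediate degrees, and without the correct timing analysis the claimed running time is not established.
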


\begin{proof}
This is essentially the \emph{zigzag} descent presented in~\cite{GKZ}. We recall the main idea for the convenience of the reader. First, one finds a good irreducible polynomial $G \in \F_{q^d}[x]$ of degree $2^e$ such that $F \equiv G \mod I$ (this can be done for $e = \lceil\log_2(4\ell+1)\rceil$, see~\cite[Th. 5.1]{Daqing} and~\cite[Lem. 2]{GKZ}).
Over the extension $\F_{q^{d2^{e-1}}}$, the polynomial $G$ splits into $2^{e-1}$ good irreducible quadratic polynomials, all conjugate under $\Gal(\F_{q^{d2^{e-1}}}/\F_{q^{d}})$. Let $Q$ be one of them, and apply the algorithm of Proposition~\ref{prop:onestepdescent} to rewrite $Q$ in terms of linear polynomials $(L_0,\dots,L_n)$ in $\F_{q^{d2^{e-1}}}[x]$ and $h_1$.
For any index $i$, let $L_i'$ be the product of all the conjugates of $L_i$ in the extension $\F_{q^{d2^{e-1}}}/\F_{q^{d}}$. Then,
$$F \equiv h_1^{2^{e-1}}L_0'^{-1}\cdot\prod_{i = 1}^nL_i' \mod I,$$
and each $L_i'$ factors into good irreducible polynomials of degree a power of 2 at most $2^{e-1}$. The descent proceeds by iteratively applying this method to each $L_i'$ until all the factors are in the factor base $\mathfrak F$.
\end{proof}

Then, as in~\cite[Sec. 2]{GKZ}, the descent algorithm of Proposition~\ref{prop:descent} can be used to compute discrete logarithms, following ideas from~\cite{EG02} and~\cite{diem_2011}. To compute the discrete logarithm of an element $h$ in base $g$, the idea is to collect relations between $g$, $h$, and elements of the factor base by applying the descent algorithm on $g^\alpha h^\beta$ for a few uniformly random exponents $\alpha$ and $\beta$.
That proves Theorem~\ref{thm:maintheorem} for $d \geq 23$. To remove the condition on $d$, suppose that $d \leq 22$, and let $d' \leq 44$ be the smallest multiple of $d$ larger than $22$. Let $I'$ be an irreducible factor of $I$ in $\F_{q^{d'}}[x]$.
The DLP can be solved in expected time $q^{\log_2(\deg I') + O(d')} = q^{\log_2 \ell + O(1)}$ in $\F_{q^{d'}}[x]/(I')$, and therefore also in the subfield $\F_{q^d}[x]/(I)$.

\section{The action of $\mathrm{PGL}_2$ on $x^q-x$} 
As already mentioned, a crucial fact behind degree two elimination is that a polynomial of the form $\alpha x^{q+1} + \beta x^q + \gamma x + \delta$ has a high chance to split completely over its field of definition. This fact is closely related to the action of $2\times 2$ matrices on such polynomials.
\begin{defn} We denote by $\star$ the action of invertible $2 \times 2$ matrices on univariate polynomials defined as follows:
$$\left(\begin{matrix}a&b \\ c & d\end{matrix}\right)\star f(x) = (cx+d)^{\deg f}f\left(\frac{ax+b}{cx+d}\right).$$
\end{defn}

Consider the $\algclosure$-vector subspace $V$ spanned by $x^{q+1},x^q,x,$ and $1$ in $\algclosure[x]$.
The above action induces an action of the group $\mathrm{PGL}_2$ on the projective space $\P(V)$, which we also write $\star$.
Parameterizing the polynomials in $\P(V)$ as $\alpha x^{q+1} + \beta x^q + \gamma x + \delta$, let $S$ be the quadratic surface in $\P(V)$ defined by the equation $\alpha\delta = \beta\gamma$. This surface is the image of the morphism
\begin{alignat*}{1}\label{eq:definitionS}
\psi: \P^1 \times \P^1 &\longrightarrow \P(V) : 
(a,b) \longmapsto (x-a)(x-b)^q.
\end{alignat*}
Note that to avoid heavy notation, everything is written affinely, but we naturally have $\psi(\infty,b) = (x-b)^q$, $\psi(a,\infty) = x-a$ and $\psi(\infty,\infty) = 1$. More generally, we say that $f(x) \in V$ has a root of degree $n$ at infinity if $f$ is of degree $q+1-n$.
Now, the following lemma shows that apart from the surface $S$, the polynomials of $\P(V)$ form exactly one orbit for $\mathrm{PGL}_2$. 

\begin{lem}\label{lem:pgl2orbit}
We have $\P(V) \setminus S = \mathrm{PGL}_2 \star (x^q-x)$.
\end{lem}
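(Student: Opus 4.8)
The plan is to translate the statement into the language of $2\times 2$ matrices and then reduce it to Lang's theorem for $\mathrm{PGL}_2$. Homogenise and write an element of $V$ as $f=\alpha x^{q+1}+\beta x^{q}z+\gamma xz^{q}+\delta z^{q+1}$, and attach to it the matrix $M_f=\abcd{\alpha}{\beta}{\gamma}{\delta}$. With $v=\vtwo{x}{z}$ and $v^{(q)}=\vtwo{x^{q}}{z^{q}}$ one has $f=(v^{(q)})^{\top}M_f\,v$ and $\det M_f=\alpha\delta-\beta\gamma$, so that $f\in\P(V)\setminus S$ if and only if $M_f$ is invertible, i.e. $[M_f]$ lies in the open subvariety $\mathrm{PGL}_2\subset\P(\mathrm{M}_2)$. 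Using $(u+w)^{q}=u^{q}+w^{q}$ one checks that $V$ is stable under $\star$ and that the action is given by $M_{g\star f}=(g^{(q)})^{\top}M_f\,g$ (up to a scalar), where $g^{(q)}$ denotes $g$ with its entries raised to the $q$-th power.

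The inclusion $\mathrm{PGL}_2\star(x^{q}-x)\subseteq\P(V)\setminus S$ is then immediate: $M_{x^{q}-x}=\abcd{0}{1}{-1}{0}$ is invertible, and $\det\big((g^{(q)})^{\top}M_f\,g\big)=(\det g)^{q+1}\det M_f$, which is nonzero whenever $\det M_f\neq 0$. (Alternatively $S=\psi(\P^{1}\times\P^{1})$ is $\mathrm{PGL}_2$-stable since $\star$ intertwines with the diagonal action on $\P^{1}\times\P^{1}$, while $x^{q}-x$ has $q+1\geq 3$ distinct roots and so is not of the form $(x-a)(x-b)^{q}$.)

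For the reverse inclusion, fix $f\in\P(V)\setminus S$, so $M_f\in\mathrm{GL}_2$, and reinterpret the zero set of $f$ by means of Frobenius. For $[v]\in\P^{1}$ the vectors $v^{(q)}$ and $M_fv$ are both nonzero, and $f(v)=0$ exactly when they are orthogonal for the standard pairing on $\algclosure^{2}$; in the plane $w_1\perp w_2$ if and only if $w_1$ is proportional to $Jw_2$ with $J=\abcd{0}{-1}{1}{0}$, so $f$ vanishes at $[v]$ precisely when $\phi([v])=(JM_f)\cdot[v]$, where $\phi\colon\P^{1}\to\P^{1}$ is the $q$-power Frobenius and $JM_f$ acts as a M\"obius transformation. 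In other words, the zero set of $f$ is the fixed-point set of the twisted Frobenius $\sigma_f:=(JM_f)^{-1}\circ\phi$. (That $f$ has exactly $q+1$ distinct roots also follows from $f'(x)=\alpha x^{q}+\gamma$ and $f\big((-\gamma/\alpha)^{1/q}\big)=(\det M_f)/\alpha\neq 0$ when $\alpha\neq 0$, the case $\alpha=0$ being easier.) Now apply Lang's theorem: since $\mathrm{PGL}_2$ is connected, $g\mapsto\phi(g)g^{-1}$ is surjective on $\mathrm{PGL}_2(\algclosure)$, so we may pick $g$ with $\phi(g)=(JM_f)g$; using $\phi\circ g=\phi(g)\circ\phi$ this gives $g\circ\phi\circ g^{-1}=\sigma_f$, hence the fixed-point set of $\sigma_f$ is $g(\P^{1}(\F_q))$, a set of $q+1$ distinct points. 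Consequently the binary form $f$ of degree $q+1$ equals, up to a scalar, $\prod_{P\in g(\P^{1}(\F_q))}\ell_P$; since $x^{q}-x=\prod_{P\in\P^{1}(\F_q)}\ell_P$ up to a scalar and $\star$ permutes linear forms through the $\mathrm{PGL}_2$-action on $\P^{1}$, this is $g'\star(x^{q}-x)$ up to a scalar for a suitable $g'$, so $[f]\in\mathrm{PGL}_2\star(x^{q}-x)$.

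The main obstacle is the appeal to Lang's theorem, or rather the claim underlying it that $\sigma_f$ is $\mathrm{PGL}_2$-conjugate to the untwisted Frobenius $\phi$ — which holds because $\sigma_f$ has the same (degree-$q$) inseparable part as $\phi$, so that the twisted form of $\P^{1}$ it defines over $\F_q$ is trivial. A reader wishing to avoid citing Lang's theorem can instead solve $\phi(g)=(JM_f)g$ by hand, taking for the columns of $g$ a basis of the fixed space of the $\algclosure$-semilinear operator $w\mapsto(JM_f)^{-1}w^{(q)}$ on $\algclosure^{2}$, which exists by descent along $\F_q\subset\algclosure$. The remaining ingredients — stability of $V$ and of $S$, the matrix formula for $\star$, the orthogonality trick, and separability of $f$ — are routine verifications.
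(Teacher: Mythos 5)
Your proof is correct, but it takes a genuinely different route from the paper's. The paper argues elementarily: both $S$ and its complement are $\mathrm{PGL}_2$-stable, so one inclusion is automatic; for the other, one shows that a double root of $f$ would force $f\in S$ (move the double root to $0$), so $f\notin S$ has $q+1$ simple roots, and normalising three of them to $0,1,\infty$ pins $g\star f$ down to $x^q-x$. You instead identify $V$ with $2\times 2$ matrices via the semilinear pairing $f(v)=(v^{(q)})^{\top}M_f\,v$, so that $S$ becomes $\{\det M_f=0\}$ and $\star$ becomes $M_f\mapsto (g^{(q)})^{\top}M_f\,g$; the zero locus of $f$ is then the fixed locus of the twisted Frobenius $\sigma_f=(JM_f)^{-1}\circ\phi$, and Lang's theorem on the connected group $\mathrm{PGL}_2$ produces a $g$ conjugating $\sigma_f$ to $\phi$, so the zeros form $g\cdot\P^1(\F_q)$ and $f$ is proportional to $g^{-1}\star(x^q-x)$. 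Both arguments are valid; the paper's is shorter and uses no machinery, whereas yours invokes Lang's theorem but is more structural: it gives the matrix model $\P(V)\setminus S\cong\mathrm{PGL}_2$ explicitly, and explains the single-orbit phenomenon by the triviality of Frobenius twists of $\P^1$. One small remark: the paper's displayed formula for $\star$, read literally with $\deg f=q$ for $f=x^q-x$, does not send $V$ to $V$; both proofs tacitly use the degree-$(q+1)$ homogeneous action $F\mapsto F\circ g$, and your homogeneous setup has the advantage of making this unambiguous.
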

\begin{proof}
First notice that both $S$ and $\P(V) \setminus S$ are closed under the action of $\mathrm{PGL}_2$. In particular, $\mathrm{PGL}_2 \star (x^q-x) \subseteq \P(V) \setminus S$.
Let $f(x) \in \P(V) \setminus S$. Suppose by contradiction that $f(x)$ has a double root $r \in \P^1$, and let $g \in \mathrm{PGL}_2$ be a linear transformation sending $0$ to $r$. The polynomial $g \star f(x)$ has a double root at $0$, so has no constant or linear term, and must be of the form $\alpha x^{q+1} + \beta x^q$, so it is in $S$, a contradiction. Therefore $f(x)$ has $q+1$ distinct roots. Let $g \in \mathrm{PGL}_2$ send $0$, $1$ and $\infty$ to three of these roots. Then, $g \star f(x)$ has a root at $0$ and at $\infty$ so is of the form $\beta x^q + \gamma x$, and since it also has a root at $1$, it can only be $x^q - x$.
\end{proof}

This result implies that most polynomials of $\P(V)$ are of the form $g \star (x^q-x)$, which splits completely over the field of definition of the matrix $g$.

\section{The role of traps} 
Consider a finite field extension $k/\F_{q^d}$ of degree $m$.
Let $Q$ be an irreducible quadratic polynomial in $k[x]$ coprime to $h_1$. Let ${a_1}$ and ${a_2}$ be the roots of $Q$ in~$\algclosure$.
The degree two elimination aims at expressing $Q$ modulo $h_1x^q - h_0$ as a product of linear polynomials.
To do so, we study a variety $\P^1_Q \subset \P(V)$ parameterizing polynomials that can possibly lead to an elimination of $Q$ (i.e., such that $Q$ divides the right hand side of~\eqref{eq:maincong}).
In this section, we define $\P^1_Q$ and show how the notion of traps and good polynomials determine how it intersects the surface $S$ from Lemma~\ref{lem:pgl2orbit}.

Recall that $V$ is the $\algclosure$-vector subspace $V$ spanned by $x^{q+1},x^q,x,$ and $1$ in $\algclosure[x]$. Consider the linear map 
\begin{equation}\label{eq:morphxqtoh}
\varphi : V \longrightarrow \algclosure[x][h_1^{-1}] :
\begin{cases}
1 &\longmapsto 1,\\
x &\longmapsto x,\\
x^q &\longmapsto {h_0}/{h_1},\\
x^{q+1} &\longmapsto x{h_0}/{h_1}.\\
\end{cases}
\end{equation}
We want $\P^1_Q$ to parameterise the polynomials $f \in V$ such that $\varphi(f)$ is divisible by~$Q$.
For any $P \in \algclosure[x]$ coprime with $h_1$, write $\varphi_P = \pi_P \circ \varphi$ where $\pi_P : \algclosure[x][h_1^{-1}] \rightarrow \algclosure[x]/P$ is the canonical projection. 
We can now define $\P^1_Q$ as
\begin{equation}\label{eq:defnPQ}
\P^1_Q = \P(\ker \varphi_Q).
\end{equation}
The variety $\P^1_Q$ is the intersection of the two planes $\P(\ker \varphi_{x-{a_1}})$ and $\P(\ker \varphi_{x-{a_2}})$.

\begin{lem}\label{lem:nondegenerate}
If $Q$ is not a degenerate trap, then ${|(\P^1_Q \cap S)(\algclosure)| = 2}$, and these two points are of the form $\psi(a_1,b_1)$ and $\psi(a_2,b_2)$, with $a_1 \neq a_2$ and $b_1 \neq b_2$.
\end{lem}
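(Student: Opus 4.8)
The plan is to pull back the defining condition of $\P^1_Q$ along the parametrization $\psi$ of $S$, turning the statement ``$\psi(a,b)\in\P^1_Q$'' into an explicit system of equations in $(a,b)\in\P^1\times\P^1$, and then to read off the solutions by a short case analysis in which the degenerate trap condition appears precisely as the obstruction to a spurious third family of solutions.

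\emph{Step 1 (pulling back to $\P^1\times\P^1$).} Recall that $\P^1_Q=\P(\ker\varphi_{x-a_1})\cap\P(\ker\varphi_{x-a_2})$, so $\psi(a,b)\in\P^1_Q$ if and only if $\varphi_{x-a_i}(\psi(a,b))=0$ for $i=1,2$. For finite $a,b$ we have $\psi(a,b)=x^{q+1}-ax^q-b^qx+ab^q$, so linearity of $\varphi$ together with \eqref{eq:morphxqtoh} gives, in $\algclosure[x][h_1^{-1}]$,
\[
\varphi(\psi(a,b)) = x\frac{h_0}{h_1}-a\frac{h_0}{h_1}-b^qx+ab^q = (x-a)\left(\frac{h_0}{h_1}-b^q\right).
\]
Writing $\rho=h_0/h_1$ and reducing modulo $x-a_i$ — which is legitimate since $h_1(a_i)\neq 0$ because $Q$ is coprime with $h_1$ — the condition becomes
\[
(a_i-a)\bigl(\rho(a_i)-b^q\bigr)=0\qquad(i=1,2).
\]
The cases $a=\infty$ or $b=\infty$ are disposed of by the analogous one-line computations $\varphi(\psi(\infty,b))=\rho-b^q$, $\varphi(\psi(a,\infty))=x-a$ and $\varphi(\psi(\infty,\infty))=1$, none of which can vanish modulo both $x-a_1$ and $x-a_2$ unless $\rho(a_1)=\rho(a_2)$.

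\emph{Step 2 (solving the system).} Since $Q$ is irreducible over the finite field $k$ it is separable, so $a_1\neq a_2$ and $a$ cannot equal both roots. If $a\neq a_1$ the first equation forces $b^q=\rho(a_1)$; if moreover $a\neq a_2$ the second forces $b^q=\rho(a_2)$ as well, whence $\rho(a_1)=\rho(a_2)$. Arguing symmetrically in $a_1\leftrightarrow a_2$, as long as $\rho(a_1)\neq\rho(a_2)$ the only solutions with $a,b$ finite are $(a,b)=(a_1,b_1)$ and $(a,b)=(a_2,b_2)$, where $b_1,b_2$ are the unique elements of $\algclosure$ with $b_1^q=\rho(a_2)$ and $b_2^q=\rho(a_1)$ (Frobenius being a bijection on $\algclosure$); both are finite because $h_1(a_i)\neq 0$, and Step 1 shows the cases at infinity contribute nothing. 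These two solutions have distinct first coordinates $a_1\neq a_2$, and $\psi$ is injective, so they give exactly two distinct points $\psi(a_1,b_1)$ and $\psi(a_2,b_2)$ of $S$ lying on $\P^1_Q$ — of the asserted form — and $b_1=b_2$ would force $\rho(a_1)=\rho(a_2)$.

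\emph{Step 3 (the trap condition — the crux).} It remains to show that the hypothesis ``$Q$ is not a degenerate trap'' forces $\rho(a_1)\neq\rho(a_2)$, which simultaneously empties the spurious family above and yields $b_1\neq b_2$; this is the heart of the argument. The roots $a_1,a_2$ are conjugate over $k=\F_{q^{dm}}$, so $a_2=a_1^{q^{dm}}$; since $h_0,h_1\in\F_{q^d}[x]\subseteq k[x]$, the rational function $\rho$ is defined over $k$ and hence $\rho(a_2)=\rho(a_1)^{q^{dm}}$. Therefore $\rho(a_1)=\rho(a_2)$ if and only if $\rho(a_1)\in\F_{q^{dm}}$; and, since $[\F_{q^d}(a_1):\F_{q^d}]=2m$ and $h_1(a_1)\neq 0$, the latter is by definition the statement that $a_1$ is a degenerate trap root of $Q$, i.e.\ that $Q$ is a degenerate trap. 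This contradicts the hypothesis, so $\rho(a_1)\neq\rho(a_2)$ and the lemma follows. The only points requiring care are the bookkeeping of the solutions at infinity and the (elementary) injectivity of $\psi$; the conceptual content is the identity $\rho(a_2)=\rho(a_1)^{q^{dm}}$, which translates the geometric coincidence $\rho(a_1)=\rho(a_2)$ into the arithmetic notion of a degenerate trap.
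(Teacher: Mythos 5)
Your proof is correct and takes essentially the same route as the paper: both pull the condition $\psi(a,b)\in\P^1_Q$ back to the equations $(a_i-a)\bigl(\rho(a_i)-b^q\bigr)=0$, use irreducibility of $Q$ to get $a_1\neq a_2$, and use the non-degenerate-trap hypothesis to kill the spurious family $\rho(a_1)=\rho(a_2)$; your Step 3 even makes explicit, via $\rho(a_2)=\rho(a_1)^{q^{dm}}$, the paper's unexplained claim that ``not a degenerate trap'' forces $\rho(a_1)\notin k$. One small imprecision, shared with the paper: you assert $[\F_{q^d}(a_1):\F_{q^d}]=2m$, but a root of an irreducible quadratic over $\F_{q^{dm}}$ can have degree $2m'$ over $\F_{q^d}$ for a proper divisor $m'$ of $m$ (with $m/m'$ odd); the conclusion survives because then $\rho(a_1)\in\F_{q^{dm}}\cap\F_{q^{2dm'}}=\F_{q^{dm'}}$, which is exactly the degenerate-trap condition at $a_1$, but this step merits a sentence.
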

\begin{proof}
For $a \in \{{a_1},{a_2}\}$, we have
\begin{alignat*}{3}
&\P(\ker \varphi_{x-{a}}) \cap S& &= \psi\left(\{a\}\times \P^1\right) \cup \psi\left(\P^1 \times \left\{\frac{h_0}{h_1}(a)^{1/q}\right\}\right).
\end{alignat*}
Since the polynomial $Q$ is irreducible, we have ${a_1} \neq {a_2}$. Furthermore, assuming that $Q$ is not a degenerate trap, we have $\frac{h_0}{h_1}({a_1}) \not\in k$, and thereby $\frac{h_0}{h_1}({a_1}) \neq \frac{h_0}{h_1}({a_2})$. Therefore $\P^1_Q \cap S$ is equal to
\begin{alignat*}{1}
\P(\ker \varphi_{x-{a_1}}) \cap \P(\ker \varphi_{x-{a_2}}) \cap S = \left\{\psi\left({a_1},\frac{h_0}{h_1}({a_2})^{1/q}\right),\psi\left({a_2},\frac{h_0}{h_1}({a_1})^{1/q}\right)\right\}.
\end{alignat*}
\end{proof}

In particular, when $Q$ is not a degenerate trap, $\P^1_Q$ is exactly the line passing through the two points $s_1 = \psi(a_1,b_1)$ and $s_2 = \psi(a_2,b_2)$. We get a $k$-isomorphism $\P^1 \rightarrow \P^1_Q : \alpha \mapsto s_1 - \alpha s_2$. For this reason the two points $s_1$ and $s_2$ play a central role in the rest of the analysis, and the following proposition shows that they behave nicely when $Q$ is a good polynomial.

\begin{prop}\label{prop:distinctroots}
Suppose $Q$ is a good polynomial. Then, $(\P^1_Q \cap S)(\algclosure) = \{s_1,s_2\}$, where $s_1 = (x-{a_1})(x-b_1)^q$, and $s_2 = (x-{a_2})(x-b_2)^q$, and the roots ${a_1}$, ${a_2}$, $b_1$ and $b_2$ are all distinct.
\end{prop}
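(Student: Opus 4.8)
The plan is to build on Lemma~\ref{lem:nondegenerate}, which applies here because a good polynomial is in particular not a degenerate trap. That lemma gives $(\P^1_Q \cap S)(\algclosure) = \{s_1, s_2\}$ with $s_1 = \psi(a_1,b_1) = (x-a_1)(x-b_1)^q$ and $s_2 = \psi(a_2,b_2) = (x-a_2)(x-b_2)^q$, and — reading off its proof — $b_1^q = \frac{h_0}{h_1}(a_2)$ and $b_2^q = \frac{h_0}{h_1}(a_1)$; moreover $a_1 \neq a_2$ (irreducibility of $Q$) and $b_1 \neq b_2$ are already part of its conclusion. All four of $a_1,a_2,b_1,b_2$ are finite, since the $a_i$ are roots of an honest quadratic and $h_1(a_i) \neq 0$ ($Q$ being coprime to $h_1$), so $s_1, s_2$ genuinely have the stated shape. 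It therefore remains only to rule out the cross-equalities $a_i = b_j$; since exchanging $a_1 \leftrightarrow a_2$ forces $b_1 \leftrightarrow b_2$, the whole configuration is symmetric under this simultaneous swap, and it suffices to treat $a_1 = b_2$ and $a_1 = b_1$.

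If $a_1 = b_2$, then $h_1(a_1)\,a_1^q = h_1(a_1)\,b_2^q = h_0(a_1)$, so $a_1$ is a root of $h_1 x^q - h_0$, i.e.\ a trap root of level $0$, contradicting the goodness of $Q$.

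If $a_1 = b_1$, I would use that $a_1$ and $a_2$ are conjugate over $k$, say $a_2 = a_1^{q^{dm}}$, together with $h_0, h_1 \in \F_{q^d}[x]$, to get $\frac{h_0}{h_1}(a_2) = \big(\tfrac{h_0}{h_1}(a_1)\big)^{q^{dm}}$; with $a_1^q = b_1^q = \frac{h_0}{h_1}(a_2)$ this becomes $a_1^q = \big(\tfrac{h_0}{h_1}(a_1)\big)^{q^{dm}}$. Write $2m_1 = [\F_{q^d}(a_1):\F_{q^d}]$, which is even and satisfies $h_1(a_1) \neq 0$, so the trap notions apply to $a_1$ with this value in place of $2m$. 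From $\F_{q^d}(a_1) = \F_{q^{2dm_1}} \subseteq \F_{q^{2dm}}$ and $a_1 \notin k = \F_{q^{dm}}$ one gets $m_1 \mid m$ with $m/m_1$ odd, hence $dm \equiv dm_1 \pmod{2dm_1}$; since $\frac{h_0}{h_1}(a_1) \in \F_{q^d}(a_1) = \F_{q^{2dm_1}}$, the exponent $q^{dm}$ may be replaced by $q^{dm_1}$, giving $a_1^q = \big(\tfrac{h_0}{h_1}(a_1)\big)^{q^{dm_1}}$. Raising to the $q^{dm_1}$-th power and using $\big(\tfrac{h_0}{h_1}(a_1)\big)^{q^{2dm_1}} = \frac{h_0}{h_1}(a_1)$ yields $a_1^{q^{dm_1+1}} = \frac{h_0}{h_1}(a_1)$, i.e.\ $a_1$ is a root of $h_1 x^{q^{dm_1+1}} - h_0$ — a trap root of level $dm_1$, again contradicting goodness. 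The cases $a_2 = b_1$ and $a_2 = b_2$ follow by the symmetry noted above.

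The manipulations are all elementary, so I do not expect a substantial obstacle; the one point requiring care is the last case, where the Frobenius exponent must be matched to the precise \emph{level} appearing in the definition of a trap root. That level is governed by $[\F_{q^d}(a_1):\F_{q^d}]$, which need not equal $2dm$ and can be a proper divisor of it, and reconciling the two is exactly what the small parity observation ($m/m_1$ odd) accomplishes. Everything else is bookkeeping on top of Lemma~\ref{lem:nondegenerate}.
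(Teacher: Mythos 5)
Your proof is correct and follows the same overall strategy as the paper's: take the two points from Lemma~\ref{lem:nondegenerate}, read off $b_1^q = \tfrac{h_0}{h_1}(a_2)$ and $b_2^q = \tfrac{h_0}{h_1}(a_1)$, and show that each cross-equality $a_i = b_j$ forces a root of $Q$ to be a trap root (level~$0$ in the $a_1 = b_2$ case, level $dm_1$ in the $a_1=b_1$ case). Where you depart is in the treatment of the level. The paper, having deduced $h_0(a_2) = a_1^q h_1(a_2)$ and using $a_1 = a_2^{q^{dm}}$, concludes directly that $a_2$ is a root of $h_1x^{q^{dm+1}}-h_0$ and calls it ``a trap of level $dm$'' — but the definition of a trap root of level $dm$ requires $[\F_{q^d}(\tau):\F_{q^d}] = 2m$, and for a root of an irreducible quadratic over $k=\F_{q^{dm}}$ that degree need only be a divisor $2m_1$ of $2m$ with $m/m_1$ odd. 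You noticed this and inserted exactly the right bookkeeping: replace the exponent $q^{dm}$ by $q^{dm_1}$ using $\tfrac{h_0}{h_1}(a_1)\in\F_{q^{2dm_1}}$ and $dm \equiv dm_1 \pmod{2dm_1}$, then raise to the $q^{dm_1}$-th power to land on $h_1x^{q^{dm_1+1}}-h_0$, which \emph{is} a trap of the level matched to $a_1$'s actual degree. You also phrase the second case in terms of $a_1$ rather than $a_2$; that is immaterial since both are roots of $Q$. In short, same route, but your version closes a small gap in the paper's own argument about which level of trap is actually being exhibited.
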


\begin{proof}
From Lemma~\ref{lem:nondegenerate}, we can write $(\P^1_Q \cap S)(\algclosure) = \{s_1,s_2\}$ with $a_1 \neq a_2$ and $b_1 \neq b_2$.
If ${a_1} = b_2$ or ${a_2} = b_1$, then $Q$ divides $x^qh_1 - h_0$, a trap of level $0$.
Now, suppose ${a_1} = b_1$ (the case ${a_2} = b_2$ is similar). Since ${a_1}$ and ${a_2}$ are the two roots of~$Q$, and $Q$ divides $(x-{a_1})(h_0 - {a_1^q} h_1)$, then ${a_2}$ is a root of $h_0 - {a_1^q} h_1$. We get that $h_0({a_2}) = {a_1^q} h_1({a_2})$, so ${a_2}$ is a root of $h_1x^{q^{dm + 1}} - h_0$, a trap of level $dm$.
\end{proof}

\section{Irreducible covers of $\P^1_Q$} \label{sec:ramifications}
In this section we suppose that $Q$ is a good polynomial, and we consider the polynomials $s_1 = (x-{a_1})(x-b_1)^q$ and $s_2 = (x-{a_2})(x-b_2)^q$ as defined in Proposition~\ref{prop:distinctroots}, where ${a_1}$, ${a_2}$, $b_1$ and $b_2$ are all distinct. Consider the variety $\P^1_Q$ from~\eqref{eq:defnPQ}.\\

Recall that our goal is to prove that a significant proportion of the polynomials of $\P^1_Q(k)$ splits completely over $k$. As mentioned in Section~\ref{sec:overview}, our method consists in constructing a morphism $C \rightarrow \P^1_Q$ where $C$ is an absolutely irreducible curve defined over $k$, such that the image of any $k$-rational point of $C$ is a polynomial that splits completely over $k$. The absolute irreducibility is crucial as it implies that $C$ has a lot of $k$-rational points.
The idea is to consider the algebraic set
$$C = \{(u,r_1,r_2,r_3) \mid \text{the $r_i$'s are three distinct roots of $u$}\} \subset \P^1_Q \times \P^1 \times \P^1 \times \P^1,$$
and the canonical projection $C \rightarrow \P^1_Q$.

\begin{prop}\label{prop:Csplitcomp}
If $(u,r_1,r_2,r_3) \in C(k)$, then $u$ splits completely over $k$.
\end{prop}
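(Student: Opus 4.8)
The plan is to combine Lemma~\ref{lem:pgl2orbit} with the rigidity of M\"obius transformations. The first step is to check that $u$ does not lie on the surface $S$: by definition every point of $S$ is of the form $\psi(a,b) = (x-a)(x-b)^q$ (with the usual conventions at infinity), hence has at most two distinct roots in $\P^1$, whereas $(u,r_1,r_2,r_3)\in C(k)$ exhibits three distinct roots $r_1,r_2,r_3$ of $u$. Therefore $u \in \P(V)\setminus S$, and Lemma~\ref{lem:pgl2orbit} provides a matrix $g \in \mathrm{PGL}_2(\algclosure)$ with $u = g \star (x^q - x)$.

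Second, I would use that $\star$ intertwines the $\mathrm{PGL}_2$-action on $\P(V)$ with the action of $\mathrm{PGL}_2$ on $\P^1$: the roots of $u = g\star(x^q-x)$ in $\P^1$, counted with multiplicity and including a possible root at infinity, are exactly $g^{-1}(\P^1(\F_q))$, since $x^q - x$ has root set $\P^1(\F_q)$. In particular $g(r_1), g(r_2), g(r_3)$ are three distinct points of $\P^1(\F_q) \subseteq \P^1(k)$. Now let $h \in \mathrm{PGL}_2$ be the unique M\"obius transformation sending $r_i$ to $g(r_i)$ for $i = 1,2,3$; because the $r_i$ and the $g(r_i)$ are all $k$-rational, the cross-ratio formulas show that $h$ is defined over $k$. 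Since $g$ and $h$ agree at the three distinct points $r_1,r_2,r_3$, and an element of $\mathrm{PGL}_2$ is determined by its values at three distinct points, we get $g = h \in \mathrm{PGL}_2(k)$. Consequently the roots of $u$ lie in $g^{-1}(\P^1(\F_q)) \subseteq g^{-1}(\P^1(k)) = \P^1(k)$, which is exactly the statement that $u$ splits completely over $k$.

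I expect the descent of $g$ from $\algclosure$ to $k$ to be the only step that requires an idea; once one notices that three $k$-rational roots rigidify $g$, it reduces to the classical fact that a M\"obius transformation is pinned down by its values at three points, with Lemma~\ref{lem:pgl2orbit} supplying the existence of $g$ in the first place. The remaining care is routine bookkeeping at infinity (allowing $\deg u < q+1$, or some $r_i$ to equal $\infty$), which does not affect the argument.
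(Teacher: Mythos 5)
Your proof is correct and follows essentially the same route as the paper: both invoke Lemma~\ref{lem:pgl2orbit} to write $u = g\star(x^q-x)$ and then use that a M\"obius transformation is pinned down by its values at the three $k$-rational points $r_1,r_2,r_3$ to conclude $g\in\mathrm{PGL}_2(k)$. The paper states this more tersely by directly taking $g$ to be the transformation sending $r_1,r_2,r_3$ to $0,1,\infty$; your version spells out the descent argument (and the preliminary check that $u\notin S$) slightly more carefully, but the underlying idea is identical.
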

\begin{proof}
Suppose that $(u,r_1,r_2,r_3)$ is a $k$-rational point of $C$. From Lemma~\ref{lem:pgl2orbit}, we get $u = g \star (x^q - x)$ where $g$ is the matrix $g \in \mathrm{PGL}_2(k)$ sending the three points $r_1,r_2$ and $r_3$ to $0$, $1$ and $\infty$.
In particular, the set of roots of $u$ is $g^{-1}(\P^1(\F_q))$ which are all in $\P^1(k)$.
\end{proof}
In the rest of this section, we prove that $C$ is absolutely irreducible (Proposition~\ref{prop:Cirred}).
The strategy is the following. Instead of considering directly $C$, which encodes three roots for each polynomial of $\P^1_Q$, we start with the variety
$$X = \{(u,r) \mid u(r) = 0\} \subset \P^1_Q \times \P^1,$$
which considers a single root for each polynomial. We can then ``add'' roots by considering fibre products.
Recall that given two covers $\nu : Z \rightarrow Y$ and $\mu: Z' \rightarrow Y$,
the geometric points of the fibre product $Z\times_YZ'$ are pairs $(z,z')$ such that $\nu(z) = \mu(z')$.
In particular, the fibre product over the projection $X \rightarrow \P^1_Q$~is
\begin{alignat*}{1}
X\times_{\P^1_Q} X &= \{((u_1,r_1),(u_2,r_2) ) \mid u_1(r_1) = 0, u_2(r_2) = 0, u_1 = u_2\}\\
&\cong \{(u,r_1,r_2) \mid u(r_1) = 0, u(r_2) = 0\}.
\end{alignat*}
This product $X\times_{\P^1_Q} X$ contains a trivial component, the diagonal, corresponding to triples $(u,r,r)$. The rest is referred to as the non-trivial part, and we prove that it is an absolutely irreducible curve (Corollary~\ref{lem:Y2Irred}).
Iterating this construction, the fibre product $(X\times_{\P^1_Q}X)\times_{X}(X\times_{\P^1_Q}X)$ (over the projection $X\times_{\P^1_Q}X \rightarrow X$ to the first component) encodes quadruples $(u,r_1,r_2,r_3)$. Therefore the curve $C$ naturally embeds into the non-trivial part of this product.
We prove that this non-trivial part is itself an absolutely irreducible curve (Lemma~\ref{lem:x3irred}).\\

Instead of the projection $X \rightarrow \P^1_Q$, we work with an isomorphic cover $\theta$.
It is easy to see that the canonical projection $X \rightarrow \P^1$ is an isomorphism, with inverse $r \mapsto (s_2(r)s_1 - s_1(r)s_2,r)$.
Through the isomorphisms ${X \cong \P^1}$ and ${\P^1_Q \cong \P^1}$, this projection is isomorphic to the cover $\theta$ in the following commutative diagram (where, again, the morphisms are written affinely for convenience):
\begin{equation*}
\xymatrixrowsep{0.3pc}
\xymatrix{
&(u,r)\ar@{|->}[r] & u &\\
(u,r)\ar@{|->}[ddd]&X \ar[r]\ar[ddd]^{\wr} & \P^1_Q\ar[ddd]^{\wr} & s_1 - \alpha s_2\ar@{|->}[ddd]\\
&&  &\\ 
&&  &\\ 
r&\P^1 \ar[r]^{\theta} & \P^1 & \alpha\\
&r\ar@{|->}[r] & s_1(r)/s_2 (r). &
}
\end{equation*}
For convenience, consider $\theta$ as a cover $X_1 \rightarrow X_0$ where $X_0 = X_1 = \P^1$.
As a first step, we study the induced fibre product ${X_1\times_{X_0}X_1}$.
It contains the diagonal $\Delta_1$, isomorphic to $X_1$.
We wish to show that $Y_2 = X_1\times_{X_0}X_1 \setminus \Delta_1$ is absolutely irreducible.
The second step consists in showing that $X_2 \times_{X_1} X_2 \setminus \Delta_2$ is also absolutely irreducible, where $X_2$ is a desingularisation of $Y_2$ and $\Delta_2$ is the diagonal.
The following lemma provides a general method used in both steps.

\begin{lem}\label{lem:FiberProductIrred}
Let $Y$ and $Z$ be two absolutely irreducible, smooth, complete curves over $k$, and consider a cover $\eta : Z \rightarrow Y$. If there exists a point $a \in Z$ such that $\eta$ is not ramified at $a$ and $\#(\eta^{-1}(\eta(a))) = 2$, then $Z\times_YZ \setminus \Delta$ is absolutely irreducible, where $\Delta$ is the diagonal component.
\end{lem}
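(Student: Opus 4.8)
The plan is to analyze the fiber product $Z\times_Y Z$ directly through its function field. Let $K = k(Y)$ be the function field of $Y$, and let $L = k(Z)$ be the function field of $Z$, viewed as a finite extension of $K$ via $\eta$. The fiber product $Z\times_Y Z$ corresponds to the ring $L \otimes_K L$, and its irreducible components correspond to the minimal primes of $L\otimes_K L$, equivalently to the factors of $L\otimes_K L$ as a product of fields (after localizing, since everything is reduced — the cover $\eta$ being generically étale over a curve). The diagonal $\Delta$ picks out the factor $L$ itself (the multiplication map $L\otimes_K L \to L$). So the task reduces to showing that the complementary factor, call it $M$, is a field, i.e.\ that $L\otimes_K L \cong L \times M$ with $M$ a field; and moreover that $M$ is a \emph{regular} extension of $k$ (so that the component $Z\times_Y Z\setminus\Delta$ is geometrically/absolutely irreducible, not just irreducible over $k$). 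For the latter, since $k$ is a finite field hence perfect, absolute irreducibility of the component is equivalent to $k$ being algebraically closed in $M$; but this will follow automatically once we exhibit, after base change to $\bar k$, the analogous single-field decomposition, because the number of geometric components equals the number of components over $k$ once we know there is exactly one.

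The key step is the ramification/splitting hypothesis. First I would reduce to the geometric situation: replace $k$ by $\algclosure$ (the statement to prove, absolute irreducibility, is insensitive to this, and all hypotheses persist). Over $\algclosure$, pick the point $a\in Z$ with $\eta$ unramified at $a$ and $\eta^{-1}(\eta(a)) = \{a, a'\}$ for a single other point $a'\neq a$. Now consider the component structure of $Z\times_Y Z$ near the fiber over $b := \eta(a)$: the fiber product, as a scheme over $Y$, has fiber over $b$ equal to $\eta^{-1}(b)\times\eta^{-1}(b)$, which here is just the four points $(a,a), (a,a'), (a',a), (a',a')$, all reduced since $\eta$ is unramified over $b$. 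The diagonal component $\Delta$ contains $(a,a)$ and $(a',a')$. Any other irreducible component $W$ of $Z\times_Y Z$ that meets the fiber over $b$ must therefore pass through one of the two off-diagonal points $(a,a')$ or $(a',a')$ — wait, $(a',a')\in\Delta$ — so through $(a,a')$ or $(a',a)$. The crucial point: the two off-diagonal points $(a,a')$ and $(a',a)$ are swapped by the involution $\sigma$ of $Z\times_Y Z$ exchanging the two factors, and $\sigma$ permutes the non-diagonal components; so it suffices to show that $(a,a')$ and $(a',a)$ lie on the \emph{same} non-diagonal component, and that there is only one such. That they lie on a common component: because the fiber of $Z\times_Y Z \setminus \Delta$ over $b$ consists of exactly these two points, and if they were on different components $W_1, W_2$, then $W_1$ would be a curve mapping to $Y$ whose fiber over $b$ is the single reduced point $(a,a')$, i.e.\ $W_1\to Y$ is unramified of degree... here I would count: $\deg(Z\times_Y Z \setminus\Delta \to Y) = (\deg\eta)^2 - \deg\eta = \deg\eta(\deg\eta - 1)$, and in our fiber over $b$ this total degree $\deg\eta(\deg\eta-1)$ must be accounted for by the geometric points $(a,a')$ and $(a',a)$ with their multiplicities; since $\deg\eta = 2$ here (as $\#\eta^{-1}(b) = 2$ and $\eta$ unramified there forces $\deg\eta = 2$!), the complementary cover has degree $2\cdot 1 = 2$, and its fiber over $b$ is exactly $\{(a,a'),(a',a)\}$, each with multiplicity one. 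A degree-$2$ cover whose generic fiber would split into two components each of degree $1$ would mean $Z\times_Y Z\setminus\Delta \to Y$ has a section; but such a section, composed with either projection to $Z$, would be a rational map $Y\dashrightarrow Z$ splitting $\eta$, contradicting... hmm, actually a degree-$2$ étale-at-$b$ cover can be disconnected. Let me instead argue: a non-diagonal component through $(a,a')$ maps to $Y$ with some degree $e\leq 2$; if $e = 1$ it is a section of $\eta\times\mathrm{id}$ type, giving a rational section of $\eta$, which would make $Z\to Y$ have a rational section — not immediately contradictory either. The clean argument: $Z\times_Y Z\setminus\Delta \to Z$ (first projection) is, fiberwise over a point $z\in Z$, the set $\eta^{-1}(\eta(z))\setminus\{z\}$; over our point $a$ this is the single point $a'$, and the map is unramified there. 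So $Z\times_Y Z\setminus\Delta \to Z$ is a degree-$1$ map near $a$... which would say it's birational to $Z$. Since $\deg\eta = 2$, indeed $Z\times_Y Z\setminus\Delta \to Z$ has degree $\deg\eta - 1 = 1$, hence is birational, hence $Z\times_Y Z\setminus\Delta$ is irreducible (it is birational to the irreducible curve $Z$), and being a curve over the perfect field $k$ that becomes irreducible after base change to $\algclosure$ — which is exactly what we just showed by running this over $\algclosure$ — it is absolutely irreducible.

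So the structure of the final writeup is: (1) observe $\#\eta^{-1}(\eta(a)) = 2$ together with unramifiedness at $a$ forces $\deg\eta = 2$ (a point with two preimages, one unramified, in a degree-$n$ cover forces the fiber to be $n$ points only if... no — wait, we need all of the fiber to be reduced; unramified \emph{at $a$} only controls $a$; but $\#\eta^{-1}(\eta(a)) = 2$ as a set with $a$ unramified means the other point $a'$ absorbs $\deg\eta - 1$ of the ramification, fine — but actually for the birationality argument I only need the first projection to have degree $\deg\eta - 1$ and to be unramified of degree $1$ at the point $(a,a')$, which needs $\#(\eta^{-1}(\eta(a))\setminus\{a\}) = 1$, i.e.\ exactly the hypothesis, giving local degree $1$ there, hence generic degree... no, local degree $1$ at one point doesn't bound the generic degree). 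I need to be more careful here, and I will: the correct statement is that $Z\times_Y Z \setminus \Delta \to Z$ (first projection) has the point $(a, a')$ as a point where the fiber is a single reduced point, hence this point is a smooth point of $Z\times_Y Z\setminus\Delta$ lying on a unique component $W$, and $W\to Z$ is unramified of some degree at $(a,a')$; the residual cover $Z\times_Y Z\setminus\Delta \to Z$ has total degree $\deg\eta - 1$, and the fiber over $a$ having just one point which is unramified means that point has multiplicity $\deg\eta - 1$ in the scheme-theoretic fiber. This is the heart of the matter and the place to be careful; I expect \textbf{this multiplicity/degree bookkeeping to be the main obstacle} — specifically, cleanly deducing that the non-diagonal locus is irreducible from the single-point unramified fiber. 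The resolution is that over $\algclosure$, the number of connected components of $Z\times_Y Z\setminus \Delta$ mapping to $Z$ is bounded by the number of points in a fiber where the cover is unramified, which here is $1$; hence connected, hence (being a smooth curve, after noting $Z\times_Y Z\setminus\Delta$ is smooth along this fiber since $\eta$ is unramified over $\eta(a)$) irreducible over $\algclosure$; running the same over $k$ and comparing component counts gives absolute irreducibility. I would cite this last principle — components of a cover are no more numerous than the cardinality of any fiber over which it is unramified — as the one standard fact invoked, and the rest is the explicit fiber computation above.
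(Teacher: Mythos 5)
Your final strategy is the same as the paper's: look at the fiber of the first projection $\mathrm{pr}:Z\times_YZ\to Z$ over the point $a$, and argue by pigeonhole that there is no room for two non-diagonal components. Where you differ is in the justification of why that pigeonhole is legitimate, and there is a genuine gap there.

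You assert that $Z\times_YZ\setminus\Delta$ is ``smooth along this fiber since $\eta$ is unramified over $\eta(a)$'', and you appeal to the principle that the number of components of a cover is bounded by the cardinality of any fiber over which the cover is unramified. But the hypothesis only says that $\eta$ is unramified \emph{at} $a$; it says nothing about the other preimage $a'$. If $\eta$ is ramified at $a'$, then $\eta$ is ramified \emph{over} $\eta(a)$, the scheme-theoretic fiber $\mathrm{pr}^{-1}(a)$ is non-reduced at $(a,a')$, and the cover $Z\times_YZ\setminus\Delta\to Z$ is ramified over $a$. So the principle you cite does not apply as stated, and the ``smooth along the fiber'' claim is not justified by the reason you give. (Your earlier detours --- deducing $\deg\eta = 2$ and then arguing the complementary cover is birational to $Z$ --- fail for the same reason: the hypothesis constrains only one fiber, not the generic degree, and you correctly back away from them.)

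What actually closes the argument, and what the paper uses, is the asymmetric singularity criterion for the fiber product of two smooth curves: a point $(z_1,z_2)\in Z\times_YZ$ is singular if and only if $\eta$ is ramified at \emph{both} $z_1$ and $z_2$. Since $\eta$ is unramified at $a$, both points $(a,a)$ and $(a,a')$ of $\mathrm{pr}^{-1}(a)$ are smooth points of $Z\times_YZ$ regardless of whether $a'$ is ramified, so each lies on exactly one irreducible component. Completeness (or finiteness of $\mathrm{pr}$) forces every component --- $\Delta$ and any putative $A$, $B$ --- to surject onto $Z$ and hence to meet this two-point fiber, and three components cannot each own one of two smooth points. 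Replace your ``unramified over $\eta(a)$'' step with this smoothness criterion and the proof goes through; as written, it invokes a hypothesis you do not have.
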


\begin{proof}
By contradiction, suppose that $Z\times_YZ \setminus \Delta$ is not absolutely irreducible, and can be decomposed as two components ${A \cup B}$. 
Let $\mathrm{pr} : Z\times_YZ \rightarrow Z$ be the projection on the first factor.
Since $Z\times_YZ$ is complete, both $A$ and $B$ are complete,
so we have $\mathrm{pr}(A) = \mathrm{pr}(B) = \mathrm{pr}(\Delta) = Z$. Observe that $\mathrm{pr}^{-1}(a)$ consists of $\#(\eta^{-1}(\eta(a))) = 2$ points, so one of them must belong to two of the components $A$, $B$ and $\Delta$. That point must therefore be singular in $Z\times_YZ$, contradicting the fact that $\eta$ is not ramified at $a$ (recall that a point $(z_1,z_2) \in Z\times_YZ$ is singular if and only if $\eta$ is ramified at both $z_1$ and $z_2$).
\end{proof}

\begin{cor}\label{lem:Y2Irred}
The curve $Y_2 = X_1\times_{X_0}X_1 \setminus \Delta_1$ is absolutely irreducible.
\end{cor}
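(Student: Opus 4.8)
The plan is to derive Corollary~\ref{lem:Y2Irred} as an immediate application of Lemma~\ref{lem:FiberProductIrred}, taking $Y = X_0 = \P^1$, $Z = X_1 = \P^1$, and $\eta = \theta$. Note that $Y_2 = X_1\times_{X_0}X_1 \setminus \Delta_1$ is exactly the curve $Z\times_Y Z\setminus\Delta$ appearing in the lemma, so the entire task reduces to verifying the two hypotheses: that $\theta$ is a cover of absolutely irreducible, smooth, complete curves over $k$, and that there is a point $a$ at which $\theta$ is unramified with $\#(\theta^{-1}(\theta(a))) = 2$.

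For the first hypothesis, I would first check that $\theta$, given affinely by $r \mapsto s_1(r)/s_2(r)$, is a non-constant morphism. This follows because $s_1 = (x-a_1)(x-b_1)^q$ and $s_2 = (x-a_2)(x-b_2)^q$ are coprime — the roots $a_1,a_2,b_1,b_2$ being pairwise distinct by Proposition~\ref{prop:distinctroots} — and have equal degree $q+1$, so $s_1$ is not a scalar multiple of $s_2$ and the rational function $s_1/s_2$ is non-constant. A non-constant morphism $\P^1 \to \P^1$ is automatically a finite cover, and $\P^1$ is smooth, complete, and absolutely irreducible, so this hypothesis holds.

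The crux is exhibiting the required point, and I would take $a = a_1$. Since $a_1 \notin \{a_2,b_2\}$ we have $s_2(a_1) \neq 0$, while $s_1(a_1) = 0$, so $\theta(a_1) = 0$. The fibre $\theta^{-1}(0)$ is the zero set of $s_1/s_2$, and because $s_2$ vanishes at neither $a_1$ nor $b_1$, this coincides with the zero set of $s_1 = (x-a_1)(x-b_1)^q$, namely $\{a_1,b_1\}$, which has exactly two elements as $a_1 \neq b_1$. Moreover $s_1$ has a \emph{simple} zero at $a_1$ and $s_2(a_1) \neq 0$, so $s_1/s_2$ has a zero of order exactly $1$ at $a_1$; equivalently, $\theta$ is unramified at $a_1$. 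Lemma~\ref{lem:FiberProductIrred} then yields that $Y_2$ is absolutely irreducible.

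I do not expect a genuine obstacle here: all the substantive input is already available (Proposition~\ref{prop:distinctroots} for the distinctness of the four roots, and the general Lemma~\ref{lem:FiberProductIrred}). The only point requiring a moment's thought is the choice of $a$: one needs a value of $\theta$ whose fibre degenerates to two points, and $\lambda = 0$ works precisely because $s_1$ has the shape $(\text{simple root})\cdot(\text{root of multiplicity }q)$, so its zero locus is two points while the total degree is $q+1$; symmetrically $\lambda = \infty$ together with $a = a_2$ would work as well.
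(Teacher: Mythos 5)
Your argument is correct and follows the paper's proof essentially verbatim: both apply Lemma~\ref{lem:FiberProductIrred} to the cover $\theta : X_1 \to X_0$ at the point $a = a_1$, observing that $\theta$ is unramified there and that $\theta^{-1}(\theta(a_1)) = \{a_1,b_1\}$ has two elements. You merely spell out in more detail the checks (non-constancy, unramification via order of vanishing of $s_1/s_2$) that the paper leaves implicit.
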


\begin{proof}
First observe that $\theta$ is ramified only at $b_1$ and $b_2$ (as can be verified from the explicit formula $\theta(r) = s_1(r)/s_2 (r)$). In particular, it is not ramified at $a_1$. Since $\#(\theta^{-1}(\theta(a_1))) = \#\{a_1,b_1\} = 2$, we apply Lemma~\ref{lem:FiberProductIrred}.
\end{proof}

\begin{lem}\label{lem:desingbij}
The desingularisation morphism $\nu : X_2 \rightarrow Y_2$ is a bijection between the geometric points.
\end{lem}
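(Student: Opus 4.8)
We need to show the desingularisation $\nu : X_2 \to Y_2$ is bijective on geometric points. Since $\nu$ is always surjective and is an isomorphism over the smooth locus of $Y_2$, the only thing to check is that each singular point of $Y_2$ has exactly one preimage, i.e.\ that $Y_2$ has no singularities that are the image of two distinct branches, and more precisely that the normalisation does not separate points. The cleanest route is to show that $Y_2$ is already unibranch at every point; equivalently, that for each singular point of $Y_2$ the local ring has a unique minimal prime over it in the integral closure. So the plan is: (1) locate all singular points of $Y_2$; (2) at each such point, show there is a single branch.

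First I would recall from the proof of Corollary~\ref{lem:Y2Irred} that the cover $\theta : X_1 \to X_0$ is ramified exactly at $b_1$ and $b_2$, say with $\theta(b_1) = \beta_1$ and $\theta(b_2) = \beta_2$; since $\theta$ has degree $q+1$, and because $s_1, s_2$ are separable with distinct roots, one should check the ramification is tame of a controlled type (in fact $\theta^{-1}(\beta_i) = \{b_i\} \cup (\text{$q$ simple points})$, as $\theta(r) = s_1(r)/s_2(r)$ and $s_1, s_2$ have simple roots). A geometric point $(r, r') \in X_1 \times_{X_0} X_1$ is singular precisely when $\theta$ ramifies at both $r$ and $r'$, so the singular locus of $X_1 \times_{X_0} X_1$ is contained in $\{b_1, b_2\} \times \{b_1, b_2\}$. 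The point $(b_i, b_i)$ lies on the diagonal $\Delta_1$ and so is removed in passing to $Y_2$. Therefore $Y_2$ can only be singular at the points $(b_1, b_2)$ and $(b_2, b_1)$, provided $\beta_1 = \beta_2$, or at no point at all if $\beta_1 \neq \beta_2$ (in which case $Y_2$ is already smooth and $\nu$ is an isomorphism, and we are done).

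So the crux is the case $\beta_1 = \beta_2 =: \beta$, and the remaining task is to analyse the local structure of $Y_2$ at $(b_1, b_2)$ (the point $(b_2,b_1)$ is symmetric). Near this point, $X_1 \times_{X_0} X_1$ is cut out in a neighbourhood of $(b_1, b_2) \in X_1 \times X_1$ by the equation $\theta(r) = \theta(r')$. Choosing local parameters $s$ at $b_1$ and $t$ at $b_2$, tameness gives local expansions $\theta(r) = \beta + u(s)\, s^{e_1}$ and $\theta(r') = \beta + v(t)\, t^{e_2}$ with $u, v$ units and $e_i = e_{b_i}(\theta) \geq 2$; after adjusting the parameters we may take $u = v = 1$, so the local equation of $X_1 \times_{X_0} X_1$ at $(b_1,b_2)$ is $s^{e_1} = t^{e_2}$. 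This plane curve singularity is irreducible (unibranch) if and only if $\gcd(e_1, e_2) = 1$; its number of branches is exactly $\gcd(e_1,e_2)$. Hence $\nu$ is bijective at $(b_1, b_2)$ exactly when $\gcd(e_1, e_2) = 1$. I would then argue that in fact $e_1 = e_2 = 2$ is impossible here, or more robustly that the relevant ramification indices are coprime — this is where I expect the main obstacle to lie. The key input should be that $a_1, a_2, b_1, b_2$ are all distinct (Proposition~\ref{prop:distinctroots}) together with an explicit computation of $e_{b_i}(\theta)$ from $\theta = s_1/s_2 = (x-a_1)(x-b_1)^q / \big((x-a_2)(x-b_2)^q\big)$: at $r = b_1$ the numerator vanishes to order $q$ while the denominator is a nonzero unit, so $\theta$ takes the value $0$ there with multiplicity $q$, i.e.\ $\beta_1 = 0$ and $e_{b_1}(\theta) = q$; symmetrically $\theta(b_2) = \infty$ with $e_{b_2}(\theta) = q$. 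But then $\beta_1 = 0 \neq \infty = \beta_2$, so $(b_1,b_2)$ is \emph{not} a fibre-product point at all, and likewise $(b_2,b_1)$ is not; combined with the discussion above this shows $Y_2$ is smooth and $\nu$ is an isomorphism, in particular a bijection on geometric points. The one genuinely delicate point to nail down carefully is therefore the ramification data of $\theta$ at $b_1$ and $b_2$ — that the ramification is tame (so the $s^{e_1} = t^{e_2}$ normal form is valid, using $\gcd(q, \cdot)$ where relevant only if $e_i$ could be divisible by $p$) and that $\theta(b_1) \neq \theta(b_2)$ — after which the bijectivity is immediate.
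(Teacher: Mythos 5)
There is a genuine gap in your argument. You correctly identify that the singular locus of the full fibre product $X_1\times_{X_0}X_1$ is contained in $\{b_1,b_2\}\times\{b_1,b_2\}$, and you correctly compute $\theta(b_1)=0\neq\infty=\theta(b_2)$ so that $(b_1,b_2)$ and $(b_2,b_1)$ are not even points of the fibre product. But the claim that the diagonal points $(b_i,b_i)$ ``lie on the diagonal $\Delta_1$ and so are removed in passing to $Y_2$'' is wrong: $Y_2$ is the union of the non-diagonal irreducible components (equivalently the closure of the set-theoretic complement), and the points $(b_i,b_i)$ lie in the intersection $Y_2\cap\Delta_1$, hence \emph{do} belong to $Y_2$. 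Concretely, in the paper's normalisation $s_1(x)=(x-1)x^q$, $s_2(x)=x-a$, the affine equation of $Y_2$ is $f(x,y)=\bigl(s_1(x)s_2(y)-s_1(y)s_2(x)\bigr)/(x-y)$, whose lowest-order part at $(0,0)=(b_1,b_1)$ is $a(x-y)^{q-1}$; for $q\geq 3$ this has multiplicity $q-1\geq 2$, so $(b_1,b_1)$ is a singular point of $Y_2$ (and $(b_2,b_2)$ is one by symmetry). Your conclusion that $Y_2$ is smooth is therefore false in general, and the substance of the lemma --- that $Y_2$ is unibranch at each of these singularities, i.e.\ that the desingularisation does not separate branches --- is precisely what remains to be shown.

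What the paper does at this point is exactly the local analysis your plan anticipated for $(b_1,b_2)$, but carried out at $(b_1,b_1)$: after a $\mathrm{PGL}_2$-change of coordinates moving $b_1$ to $0$, it blows up $A=\{f=0\}$ at $(0,0)$, observes that the strict transform meets the exceptional line in a single point $u=1$ (using that the tangent cone is the single line $x=y$ with multiplicity $q-1$), and checks that this point is non-singular; the case of $(b_2,b_2)$ follows by symmetry. Your local framework (tame local normal form, counting branches) is the right kind of tool, but it must be applied at $(b_i,b_i)$ rather than at $(b_1,b_2)$, and there the local equation is not of the product form $s^{e_1}=t^{e_2}$ you wrote down: both local parameters expand $\theta$ around the \emph{same} critical value, so the germ is $(s-t)^{q-1}\cdot(\text{unit})+\text{higher order}=0$, and irreducibility of the branch must be verified directly, e.g.\ by the blow-up computation of the paper.
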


\begin{proof}
It is sufficient to prove that for any singular point $P$ on $Y_2$, and $\varphi: \tilde Y_2 \rightarrow Y_2$ the blowing-up at $P$, the preimage $\varphi^{-1}(P)$ consists of a single smooth point.
Up to a linear transformation of $X_1 = \P^1$, we can assume that $s_1$ and $s_2$ are of the form $s_1(x) = (x-1)x^q$ and $s_2(x) = x-a$, for some $a \neq 0,1$. 
The intersection $A$ of the curve $Y_2$ with the affine patch $\mathbf A^2 \subset \P^1 \times \P^1$ is then defined by the polynomial 
$$f(x,y) = \frac{s_1(x)s_2(y) - s_1(y)s_2(x)}{x-y} = \frac{x^q(x-1)(y-a) - y^q(y-1)(x-a)}{x-y}.$$
It remains to blow up $A$ at the singularity $(0,0)$ (which corresponds to $(b_1,b_1)$ through the linear transformation), and check the required properties. This is easily done following~\cite[Ex. 4.9.1]{hartshorne}, and we include details for the benefit of the reader. Let $\psi: Z \rightarrow \mathbf A^2$ be the blowing-up of $\mathbf A^2$ at $(0,0)$. The inverse image of $A$ in $Z$ is defined in $\mathbf A^2 \times \P^1$ by the equations $f(x,y) = 0$ and $ty = xu$ (where $t$ and $u$ parameterize the factor $\P^1$). It consists of two irreducible components: the blowing-up $\tilde A$ of $A$ at $(0,0)$ and the exceptional curve $\psi^{-1}(0,0)$. Suppose $t \neq 0$, so we can set $t = 1$ and use $u$ as an affine parameter (since $f$ is symmetric, the case $u \neq 0$ is similar). We have the affine equations $f(x,y) =0$ and $y = xu$, and substituting we get $f(x,xu) = 0$, which factors as
$$f(x,xu) = x^{q-1} \frac{(x-1)(xu-a) - u^q(xu-1)(x-a)}{1-u}.$$
The blowing-up $\tilde A$ is defined on $t = 1$ by the equations $g(x,u) = f(x,xu)/x^{q-1} = 0$ and $y = xu$. It meets the exceptional line only at the point $u = 1$, which is non-singular.
\end{proof}

The projection $X_1\times_{X_0}X_1 \rightarrow X_1$ on the first component induces another cover ${\theta_2 : X_2 \rightarrow X_1}$, through which we build the fibre product $X_2\times_{X_1}X_2$. As above, it contains a diagonal component $\Delta_2$ isomorphic to $X_2$.

\begin{lem}\label{lem:x3irred}
The curve $Y_3 = X_2\times_{X_1}X_2 \setminus \Delta_2$ is absolutely irreducible.
\end{lem}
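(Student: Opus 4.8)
The plan is to apply Lemma~\ref{lem:FiberProductIrred} again, this time to the cover $\theta_2 : X_2 \to X_1$. To do so, it suffices to exhibit a single point $a \in X_2$ at which $\theta_2$ is unramified and whose fibre $\theta_2^{-1}(\theta_2(a))$ has exactly two points. First I would analyse $\theta_2$ explicitly. By Lemma~\ref{lem:desingbij}, the desingularisation $\nu : X_2 \to Y_2$ is a bijection on geometric points, so I may work directly with $Y_2 = X_1\times_{X_0}X_1 \setminus \Delta_1$ and the projection $Y_2 \to X_1$ onto the first factor, keeping track of what happens at the one blown-up singular point. For a generic $r \in X_1$, the fibre of $Y_2 \to X_1$ over $r$ consists of the pairs $(r, r')$ with $\theta(r') = \theta(r)$ and $r' \neq r$; since $\theta$ has degree $q+1$, this generic fibre has $q$ points. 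That is too many, so the relevant point $a$ must lie over a branch point of $\theta$, or one must instead count the fibre of $\theta_2$ over a carefully chosen point where the degree-$(q+1)$ cover $\theta$ degenerates almost completely.

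The key observation is that $\theta$ is totally (or almost totally) ramified at certain points. Recall $\theta(r) = s_1(r)/s_2(r)$ with $s_1 = (x-a_1)(x-b_1)^q$ and $s_2 = (x-a_2)(x-b_2)^q$. Over $\theta = \infty$ the fibre is $\{a_2\} \cup \{b_2 \text{ with multiplicity } q\}$, i.e.\ geometrically just the two points $a_2, b_2$, with $\theta$ ramified of index $q$ at $b_2$ and unramified at $a_2$. The point of $Y_2$ I would take is the one lying over $r = a_2 \in X_1$ in the pair $(a_2, b_2)$: that is, $a = (a_2, b_2) \in Y_2$ (it is off the diagonal since $a_2 \neq b_2$ by Proposition~\ref{prop:distinctroots}, and it is a smooth point of $Y_2$ since $\theta$ is unramified at the first coordinate $a_2$). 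Its image under $\theta_2$ is $a_2 \in X_1$, and I claim the fibre $\theta_2^{-1}(a_2)$ has exactly two points: these are the points of $Y_2$ whose first coordinate is $a_2$, namely the pairs $(a_2, r')$ with $\theta(r') = \theta(a_2) = \infty$ and $r' \neq a_2$ — and the only such $r'$ (geometrically) is $b_2$. So $\theta_2^{-1}(a_2) = \{(a_2,b_2)\}$ set-theoretically, which is a single point, not two. Hence I should instead choose the base point so that the fibre genuinely splits into two: take $a = (b_1, b_2)$ — wait, $\theta(b_1) = 0 \neq \infty = \theta(b_2)$, so that is not in $Y_2$. The correct choice is a pair $(r, r')$ over a \emph{generic enough} value; to get a fibre of size exactly $2$ I instead use that the fibre of $\theta_2$ over a point $r \in X_1$ with $\theta$ unramified at $r$ counts the points of $Y_2$ above $r$, which generically number $q$ — so Lemma~\ref{lem:FiberProductIrred} as stated does not apply verbatim, and the argument must be adapted.

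So the real structure of the proof is: choose the point $a \in X_2$ lying over $(a_1, b_1) \in Y_2$ (a smooth point, by the proof of Lemma~\ref{lem:desingbij}, where the blow-up had preimage a single nonsingular point at $u=1$; here $(a_1,b_1)$ with $\theta(a_1)=\theta(b_1)$, noting $a_1 \neq b_1$). I would compute the degree and ramification of $\theta_2$ near $a$ directly from the explicit equation $g(x,u) = f(x,xu)/x^{q-1}$ from Lemma~\ref{lem:desingbij}, localised at $u = 1$, together with the factorisations of $s_1, s_2$. The point is that $\theta_2$ should be unramified at this $a$, while the total fibre $\theta_2^{-1}(\theta_2(a))$, although it has $q$ geometric points, reduces to two once we either restrict to a suitable component or use a refined version of Lemma~\ref{lem:FiberProductIrred} counting a \emph{pair} of points that are forced together. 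Concretely, I expect the clean statement is that $\theta_2$ has a point $a$ with $\theta_2$ unramified at $a$ and with exactly one other point $a'$ in the fibre $\theta_2^{-1}(\theta_2(a))$ at which $\theta_2$ is ramified to order $q-1$ (coming from the totally ramified behaviour of $\theta$ at $b_1$ or $b_2$), so that any decomposition of $X_2\times_{X_1}X_2 \setminus \Delta_2$ into two components would force a singular point outside the allowed locus, exactly as in Lemma~\ref{lem:FiberProductIrred}.

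The main obstacle I anticipate is the bookkeeping of ramification of $\theta_2$ at and near the exceptional point produced by the desingularisation in Lemma~\ref{lem:desingbij}: one must verify that the single smooth point at $u=1$ is unramified over $X_1$ (or compute its exact ramification index) using the explicit polynomial $g(x,u)$, and then confirm that the remaining points of the relevant fibre of $\theta_2$ are either absent, coincide, or are ramified — so that the projection $X_2\times_{X_1}X_2 \to X_2$ has a fibre over $a$ meeting the diagonal and any putative extra component in a common, necessarily singular, point, contradicting unramifiedness. Once the ramification picture of $\theta_2$ is pinned down, the irreducibility of $Y_3$ follows from (a mild strengthening of) Lemma~\ref{lem:FiberProductIrred} by the same argument as in Corollary~\ref{lem:Y2Irred}.
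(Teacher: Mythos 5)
Your overall strategy is correct --- apply Lemma~\ref{lem:FiberProductIrred} to $\theta_2 : X_2 \to X_1$ --- and it is the same strategy the paper uses. But the execution has a genuine gap: you never locate the point $a$ that makes the lemma work, and you end by conjecturing that the lemma needs to be ``adapted'' or ``mildly strengthened,'' which it does not.

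The point you are missing is $a = \nu^{-1}(b_1,a_1)$, i.e.\ the point of $Y_2$ with \emph{ramified} root $b_1$ in the \emph{first} coordinate and unramified root $a_1$ in the second. You tried $(a_2,b_2)$ and correctly found the fibre over $a_2$ has only one geometric point: since $\theta$ is unramified at $a_2$, the diagonal point $(a_2,a_2)$ does not lie on $Y_2$, and only $(a_2,b_2)$ remains. The same happens over $a_1$: the set-theoretic fibre is just $\{(a_1,b_1)\}$ (with multiplicity $q$), so your claim that this fibre ``has $q$ geometric points'' is wrong, and moreover $\theta_2$ is \emph{ramified} at $(a_1,b_1)$ of index $q$ because $\theta$ is ramified at the second coordinate. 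You also misattribute the $u=1$ exceptional point of the blow-up in Lemma~\ref{lem:desingbij} to $(a_1,b_1)$; that computation is at $(b_1,b_1)$.

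What actually works: take the first coordinate to be $b_1$. Then $\theta_2^{-1}(b_1)$ consists of $(b_1,a_1)$ and $(b_1,b_1)$. The diagonal point $(b_1,b_1)$ is a genuine point of $Y_2$ precisely because $\theta$ is ramified at $b_1$ (one checks $f(b_1,b_1)=s_1(b_1)s_2'(b_1)-s_1'(b_1)s_2(b_1)=0$ since both $s_1(b_1)$ and $s_1'(b_1)$ vanish), and it is the singular point that $\nu$ resolves to a single point of $X_2$. So $\theta_2^{-1}(\theta_2(\nu^{-1}(b_1,a_1)))=\{\nu^{-1}(b_1,a_1),\nu^{-1}(b_1,b_1)\}$ has exactly two geometric points (with ramification indices $1$ and $q-1$, summing to $\deg\theta_2 = q$), and $\theta_2$ is unramified at $\nu^{-1}(b_1,a_1)$ because $\theta$ is unramified at the second coordinate $a_1$. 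That is precisely the hypothesis of Lemma~\ref{lem:FiberProductIrred}, applied verbatim --- no strengthening required. Your closing speculation about ``a point $a$ with $\theta_2$ unramified and exactly one other point $a'$ ramified to order $q-1$'' is in fact this picture; you just did not realise it already falls within the scope of the lemma, and you never pinned down $a$.
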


\begin{proof}
Let $\nu : X_2 \rightarrow Y_2$ be the bijective morphism from Lemma~\ref{lem:desingbij}. 
Since $\theta_1$ is only ramified at $b_1$ and $b_2$,
the cover $\theta_2$ is ramified at most at the points $\nu^{-1}(b_i,b_i)$ and $\nu^{-1}(a_i,b_i)$ (for $i \in \{1,2\}$).
In particular, it is not ramified at $\nu^{-1}(b_1,a_1)$. Since $\#(\theta_2^{-1}(\theta_2(\nu^{-1}(b_1,a_1)))) = \#\{\nu^{-1}(b_1,a_1),\nu^{-1}(b_1,b_1)\} = 2$, we apply Lemma~\ref{lem:FiberProductIrred}.
\end{proof}

\begin{prop}\label{prop:Cirred}
The curve $C$ is absolutely irreducible.
\end{prop}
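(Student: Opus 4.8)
The plan is to identify $C$ with (a component of) the iterated fibre product studied above and deduce its absolute irreducibility from Lemma~\ref{lem:x3irred}. Recall that $X \cong \P^1_Q$ via the projection $(u,r) \mapsto r$, and under the identifications $X \cong X_1$ and $\P^1_Q \cong X_0$ the projection $X \to \P^1_Q$ becomes the cover $\theta$. First I would unwind the fibre products: $X\times_{\P^1_Q}X$ has geometric points $(u,r_1,r_2)$ with $u(r_1) = u(r_2) = 0$, and its non-diagonal part is (the image of) $Y_2$, desingularised by $X_2$ via Lemma~\ref{lem:desingbij}; iterating, the fibre product $(X\times_{\P^1_Q}X)\times_X(X\times_{\P^1_Q}X)$ over the first-component projection parameterises quadruples $(u,r_1,r_2,r_3)$ with $u(r_1) = u(r_2) = u(r_3) = 0$, the diagonal $r_2 = r_3$ corresponding to $\Delta_2$ at the level of $X_2$, so the non-trivial part is $Y_3$.

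The curve $C$ is the locus of such quadruples with the three roots $r_1,r_2,r_3$ \emph{distinct}; this is the complement in $Y_3$ of finitely many points (those with a coincidence among $r_1, r_2, r_3$, forced onto the degenerate loci already removed as diagonals or onto fibres over the finitely many ramification points). So I would argue that $C$ is a dense open subset of the absolutely irreducible curve $Y_3$ (possibly after the birational identification provided by $\nu$), hence itself absolutely irreducible — removing finitely many points from an absolutely irreducible curve leaves it absolutely irreducible. Concretely: the natural map $C \hookrightarrow Y_3$ realises $C$ as the open subcurve where the coordinates are pairwise distinct, and since $Y_3$ is irreducible of dimension one, any nonempty open subset is dense and irreducible, and absolute irreducibility is preserved since it can be checked on the function field, which $C$ shares with $Y_3$.

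The one genuinely careful step — and the main obstacle — is matching the combinatorial bookkeeping: checking that the two iterated fibre products, taken in the order $X_1\times_{X_0}X_1$ then desingularise to $X_2$, then $X_2\times_{X_1}X_2$, really does encode triples of roots of a single $u \in \P^1_Q$ rather than some nearby configuration, and that the extra diagonal/ramification loci one throws away to pass from $Y_3$ to $C$ are exactly finite. This amounts to tracking how the isomorphism $X \cong \P^1$ and the cover $\theta(r) = s_1(r)/s_2(r)$ interact with the two projections $X\times_{\P^1_Q}X \to X$, and noting that the fibre $\theta^{-1}(\theta(r))$ generically has size $\deg\theta = q+1$, with all $q+1$ preimages being the $q+1$ roots of the corresponding polynomial $u = s_1 - \theta(r) s_2 \in \P^1_Q \setminus S$ (by Lemma~\ref{lem:pgl2orbit}, such $u$ has $q+1$ distinct roots). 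Once this dictionary is in place, I would simply invoke Lemma~\ref{lem:x3irred} and conclude.

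\begin{proof}
By construction $C$ is the open subvariety of $X_2 \times_{X_1} X_2$ cut out by requiring that the three roots it parameterises be pairwise distinct, and this locus avoids the diagonal $\Delta_2$; hence $C$ is isomorphic to a nonempty open subset of $Y_3 = X_2\times_{X_1}X_2 \setminus \Delta_2$. Since $Y_3$ is an absolutely irreducible curve by Lemma~\ref{lem:x3irred}, and a nonempty open subset of an absolutely irreducible variety is again absolutely irreducible (it is dense and shares the same function field), we conclude that $C$ is absolutely irreducible.
\end{proof}
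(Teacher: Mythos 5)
Your proposal is correct and takes essentially the same route as the paper: realise $C$ as a dense open subvariety of $Y_3$ via the map $(u,r_1,r_2,r_3)\mapsto(\nu^{-1}(r_1,r_2),\nu^{-1}(r_1,r_3))$ (well-defined as a morphism because $\nu$ is an isomorphism away from the singularities of $Y_2$, which lie on the diagonal and are therefore avoided when the $r_i$ are distinct), then conclude from Lemma~\ref{lem:x3irred} since a nonempty open subset of an absolutely irreducible curve is absolutely irreducible. The paper's version is just a terser phrasing of the same embedding argument.
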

\begin{proof}Let $\nu : X_2 \rightarrow Y_2$ be the morphism from Lemma~\ref{lem:desingbij}.
It is an isomorphism away from the singularities of $Y_2$, so
$$C \longrightarrow Y_3 : (u,r_1,r_2,r_3) \longmapsto (\nu^{-1}(r_1,r_2), \nu^{-1}(r_1,r_3))$$
is a morphism. It is an embedding, and the result follows from Lemma~\ref{lem:x3irred}.
\end{proof}

\section{Counting split polynomials in $\P^1_Q$}\label{sec:proofmainprop}

Recall that we wish to prove Proposition~\ref{prop:onestepdescent} by showing that $\P^1_Q(k)$ contains a lot of polynomials that split into good polynomials over $k$.
The results of Section~\ref{sec:ramifications} allow us to prove in Theorem~\ref{thm:splitpolyonp} that a lot of polynomials in~$\P^1_Q(k)$ do split. We then show in Proposition~\ref{prop:coprimality} that all these polynomials are coprime, which implies that bad polynomials cannot appear too often.

\begin{thm}\label{thm:splitpolyonp}
Let $k/\F_{q^d}$ be a field extension of degree $m$, and $Q$ be a good irreducible quadratic polynomial in $k[x]$ coprime to $h_1$.
If $dm \geq 23$, there are at least $\#k/2q^3$ polynomials in~$\P^1_Q$ that split completely over the field $k$.
\end{thm}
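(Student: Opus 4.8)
The plan is to count $k$-rational points on the absolutely irreducible curve $C$ constructed in Section~\ref{sec:ramifications} and transfer this count to $\P^1_Q(k)$ via the projection $\pi : C \to \P^1_Q$. By Proposition~\ref{prop:Cirred}, $C$ is an absolutely irreducible curve over $k$; since it is a smooth complete curve (or after passing to its smooth completion) of bounded genus, the Hasse--Weil bound gives $\#C(k) \geq \#k + 1 - 2g\sqrt{\#k} - (\text{small correction for the points at infinity and any singularities})$. The genus $g$ of $C$ is bounded by an absolute constant: indeed $C$ is built from $\P^1$ by taking two fibre products over the degree-$(q+1)$ cover $\theta$, which is ramified only above the images of $b_1, b_2$ (as noted in the proof of Corollary~\ref{lem:Y2Irred}), so a Riemann--Hurwitz computation bounds $g$ by something like $O(q^2)$ at worst --- but actually the ramification is tame and mild enough that one expects $g = O(1)$ or at worst $O(q)$. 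I would carry out the Riemann--Hurwitz bookkeeping for $X_2 \to X_1$ and then for $Y_3 \to X_2$, using Lemma~\ref{lem:desingbij} to control the desingularisation, to get an explicit bound on $g$.

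Next I would bound the fibres of $\pi : C \to \P^1_Q$. Over a polynomial $u \in \P^1_Q(k)$, the fibre consists of ordered triples of distinct roots of $u$, so it has at most $(q+1)q(q-1) < q^3$ points (and is empty unless $u$ splits, in which case it has exactly that many when $u$ has $q+1$ distinct roots, which is the generic case on $\P^1_Q \setminus S$ by Lemma~\ref{lem:pgl2orbit}). Therefore the number of $u \in \P^1_Q(k)$ that split completely over $k$ is at least $\#C(k)/q^3$. Combining with the Hasse--Weil lower bound, this is at least $(\#k + 1 - 2g\sqrt{\#k} - c)/q^3$; using $dm \geq 23$ to ensure $\#k = q^{dm}$ is large enough relative to the (constant or slowly-growing) genus and the error terms, this quantity exceeds $\#k/(2q^3)$.

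The main obstacle I expect is making the genus bound for $C$ genuinely effective and checking it is small enough that the hypothesis $dm \geq 23$ suffices. One must be careful that $C$ as defined is a possibly singular, non-complete algebraic set in $\P^1_Q \times (\P^1)^3$; the clean way is to work with the smooth complete model coming from the chain $\P^1 \leftarrow X_2 \leftarrow X_3$ (smooth completion of $Y_3$), relate its $k$-points to those of $C$ up to an $O(q^2)$-bounded discrepancy accounting for removed diagonals and added points at infinity, and then the only real content is a Riemann--Hurwitz estimate controlling the ramification of the two successive degree-$(q+1)$ covers. Since all the ramification is supported over the finitely many points $a_i, b_i$ and is tame, the genus is bounded by an absolute constant independent of $q$, so the Hasse--Weil estimate $\#C(k) \geq \#k - O(\sqrt{\#k}) - O(q^2)$ holds, and for $\#k \geq q^{23}$ this dominates $\#k/2 \cdot (\text{stuff})$; dividing by the fibre bound $q^3$ yields the claimed $\#k/2q^3$. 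A secondary point to handle cleanly is that we want the split polynomials to be counted with multiplicity one, i.e. to pass from "at least $\#C(k)/q^3$ points of $C$" to "at least that many distinct $u$"; this is immediate since each $u$ contributes at most $q^3$ points to $C(k)$.
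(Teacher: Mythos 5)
Your plan — count $k$-points on the absolutely irreducible curve $C$ via a Weil-type bound, then divide by the size of the fibres of $C\to\P^1_Q$, which is $(q+1)q(q-1)<q^3$ — is exactly the paper's strategy, and your fibre analysis and the final division by $q^3$ are correct. But there is a genuine error in the middle step: the genus of $C$ (equivalently of $Y_3$) is \emph{not} $O(1)$, and the ramification is \emph{not} tame. The cover $\theta:\P^1\to\P^1$, $\theta(r)=s_1(r)/s_2(r)$ with $s_1(x)=(x-a_1)(x-b_1)^q$, is ramified at $b_1$ and $b_2$ with ramification index $q$, a power of the characteristic, so this is wild ramification. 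More to the point, $Y_2\subset\P^1\times\P^1$ has bidegree roughly $(q,q)$, so its arithmetic genus is already on the order of $(q-1)^2$, and the further degree-$(q-1)$ cover pushes the genus of $Y_3$ up to at least $O(q^3)$. A ``genus $O(1)$'' Hasse--Weil bound is therefore false, and the Riemann--Hurwitz bookkeeping you propose would, if carried out honestly (tracking the wild part of the different at $b_1,b_2$ and the singularities of the fibre products), reveal this.

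The paper sidesteps the genus computation entirely by embedding $Y_3$ into $\mathbf A^3$ as a component of the curve $\{\theta(r_1)=\theta(r_2),\ \theta(r_1)=\theta(r_3)\}$, which has degree at most $4(q+1)^2$, and then invoking a Weil-type point-count bound for possibly singular curves stated in terms of degree rather than genus (the cited result of Bach, \cite[Th.~3.1]{Ba96}). This yields the error term $16(q+1)^4\sqrt{\#k}$, which is what a genus of order $q^4$ would give — four powers of $q$ more than your estimate. After also subtracting the $\leq 4(q+1)^2$ points at infinity and the $\leq 2(q^3-q)$ points over $\P^1_Q\cap S$, one still has $\#C(k)\geq\#k/2$ once $dm\geq 23$, so the final numerology works; but it works because the threshold $23$ was chosen to absorb a $q^4\sqrt{\#k}$ error, not an $O(\sqrt{\#k})$ one. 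In short: replace your optimistic genus estimate with either the degree-based Bach bound or an honest Riemann--Hurwitz computation accounting for wild ramification, and the rest of your argument matches the paper.
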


\begin{proof}
Let ${\Theta : Y_3 \rightarrow \P^1_Q}$ be the cover resulting from the composition of the successive covers of Section~\ref{sec:ramifications}.
Let $S_3 = \Theta^{-1}(\P^1_Q \cap S)$. 
The embedding $C\rightarrow Y_3$ from Proposition~\ref{prop:Cirred} has image $Y_3 \setminus S_3$.
The morphism $$\mu : Y_3 \rightarrow \P^1 \times \P^1 \times \P^1 : (\nu^{-1}(r_1,r_2), \nu^{-1}(r_1,r_3)) \mapsto (r_1,r_2,r_3)$$
restricts to an embedding of $Y_3\setminus S_3$.
Let $A$ be the intersection of $\mu(Y_3)$ with the affine patch $\mathbf A^3$. The curve $A$ is a component of the (reducible) curve defined by the equations $\theta(r_1) = \theta(r_2)$ and $\theta(r_1) = \theta(r_3)$. Therefore $A$ is of degree at most $4(q+1)^2$. If $B$ is the closure of $A$ in $\P^3$, then~\cite[Th. 3.1]{Ba96} shows that
$$|\# B(k) - \#k - 1| \leq 16(q+1)^4\sqrt{\#k}.$$
Since $Y_3$ is complete, $\mu(Y_3)$ is closed, so all the points of $B \setminus A$ are at infinity, and there are at most $\deg(B) \leq 4(q+1)^2$ of them. Also, at most $2(q^3-q)$ points of $B$ are in $\mu(S_3)$ (because $\#S = 2$ and $\Theta$ is of degree $q^3-q$). Therefore
$$\#C(k) = \#(Y_3\setminus S_3)(k) \geq \#k + 1 - 16(q+1)^4\sqrt{\#k} - 4(q+1)^2 - 2(q^3-q).$$
Since $q \geq 2$ and $dm \geq 23$, we get $\#C(k) \geq \#k /2$. From Proposition~\ref{prop:Csplitcomp}, and the fact that the map $\Theta$ is $q^3-q$ to one, we get that at least $\#k/2q^3$ polynomials in $\P^1_Q$ split completely over $k$.
\end{proof}

Let $\varphi$ be the morphism defined in~\eqref{eq:morphxqtoh}.

\begin{prop}\label{prop:coprimality}
Suppose $Q$ is a good polynomial. For any two distinct polynomials $f$ and $g$ in $\P^1_Q(\algclosure)$, we have $\gcd(f,g) = 1$ and $\gcd(h_1\varphi(f),h_1\varphi(g)) = Q$.
\end{prop}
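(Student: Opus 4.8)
The plan is to exploit the linear structure of $\P^1_Q$ as the line through $s_1 = (x-a_1)(x-b_1)^q$ and $s_2 = (x-a_2)(x-b_2)^q$, with the $a_i$ and $b_i$ all distinct by Proposition~\ref{prop:distinctroots}. A general element of $\P^1_Q(\algclosure)$ is $f_\lambda = s_1 - \lambda s_2$ for $\lambda \in \P^1$, and I want to understand its roots. First I would observe that a root $r$ of $f_\lambda$ satisfies $s_1(r) = \lambda s_2(r)$, i.e. $r$ lies in a fibre of the degree $q+1$ rational map $\theta = s_1/s_2 : \P^1 \to \P^1$ studied in Section~\ref{sec:ramifications}. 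Since distinct $\lambda$'s correspond to distinct fibres of $\theta$, and the fibres of a map are pairwise disjoint, any common root of $f_\lambda$ and $f_\mu$ for $\lambda \neq \mu$ would have to be a point where $\theta$ is simultaneously undefined or takes both values $\lambda$ and $\mu$ — that is, a base point of the pencil $\{s_1 - \lambda s_2\}$. The base points are exactly the common roots of $s_1$ and $s_2$, and since $a_1, a_2, b_1, b_2$ are all distinct, $\gcd(s_1,s_2) = 1$, so there are no base points. Hence $\gcd(f,g) = 1$ for distinct $f,g \in \P^1_Q(\algclosure)$. (I should also handle roots at infinity — i.e. the possibility that $f_\lambda$ has degree $< q+1$ — via the projective/$\P^1$ formulation already set up in Section~2, where roots at infinity are just the point $\infty \in \P^1$; the same fibre argument applies.)

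For the second assertion, recall $\P^1_Q = \P(\ker\varphi_Q)$, so every $f \in \P^1_Q(\algclosure)$ has $Q \mid \varphi(f)$, hence $Q \mid h_1\varphi(f)$; thus $Q \mid \gcd(h_1\varphi(f), h_1\varphi(g))$. For the reverse divisibility I would argue at the level of roots. From the explicit formula \eqref{eq:morphxqtoh}, $h_1\varphi(f) = \alpha x h_0 + \beta h_0 + \gamma x h_1 + \delta h_1$ when $f = \alpha x^{q+1} + \beta x^q + \gamma x + \delta$; this polynomial has degree at most $3$ (and we must track its roots, including roots at infinity coming from degree drop). If $f \neq g$ in $\P^1_Q$, then by the first part $f$ and $g$ have disjoint root sets in $\P^1$. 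I would show that a root of $h_1\varphi(f)$ outside the roots of $Q$ "comes from" a root of $f$: more precisely, using congruence \eqref{eq:maincong}, $h_1\varphi(f) \equiv f \pmod{h_1}$ is not quite enough, so instead I would trace through the relation directly. For $r$ a root of $h_1\varphi(f)$ with $h_1(r) \neq 0$, we have $\varphi(f)(r) = 0$, i.e. $f$ maps to $0$ under the substitution $x^q \mapsto h_0/h_1$; writing $\rho = (h_0/h_1)(r)$, this says $(x-r)$ "kills" $f$ after the substitution, which pins down $r$ (resp. the pair $(r,\rho)$) in terms of the two points $s_1, s_2$ spanning $\ker\varphi_Q$. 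The upshot I expect: the roots of $h_1\varphi(f)$ away from those of $Q$ and of $h_1$ are determined by the two points of $\P^1_Q\cap S$, exactly as in Lemma~\ref{lem:nondegenerate} and Proposition~\ref{prop:distinctroots}, and two distinct $f,g$ give disjoint such root sets; combined with a bound on $\deg Q + (\text{contribution of } h_1)$ this forces $\gcd(h_1\varphi(f),h_1\varphi(g)) = Q$ exactly.

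The main obstacle I anticipate is the bookkeeping in the second part: controlling roots at infinity and roots at zeros of $h_1$ in $h_1\varphi(f)$, and showing cleanly that no "accidental" common root of $h_1\varphi(f)$ and $h_1\varphi(g)$ can occur beyond $Q$. The cleanest route is probably to phrase everything through the morphism $\P^1_Q \cong \P^1 \to \P(W)$ (or a suitable $\P^1$) sending $f$ to the degree-$\le 3$ polynomial $h_1\varphi(f)$, identify its ramification/base locus, and note that the roots of $Q = (x-a_1)(x-a_2)$ are precisely the "universal" common factor forced by $f \in \ker\varphi_Q$, while the remaining (at most one) root varies injectively with $f$ — so distinct $f$ contribute distinct extra roots. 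I would double-check the degree count $\deg(h_1\varphi(f)) \le 3 = 2 + 1$ matches "$Q$ times one linear factor $L_0$", which is exactly the factorisation already used in Section~\ref{sec:overview}, making the claim essentially that the linear cofactor $L_0$ depends injectively on $f$; this injectivity follows from $\gcd(f,g)=1$ together with \eqref{eq:maincong} applied to $f - \lambda g$ for the appropriate scalar.
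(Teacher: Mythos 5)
Your first assertion, $\gcd(f,g)=1$, is essentially the paper's argument in different clothing: you phrase it via fibres of $\theta = s_1/s_2$ and base points of the pencil, while the paper phrases it as "a common root of $f$ and $g$ is a common root of every $\alpha f + \beta g$, in particular of $s_1$ and $s_2$"; both come down to $\gcd(s_1,s_2)=1$, guaranteed by Proposition~\ref{prop:distinctroots}. That part is fine.

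The second assertion is where you stall, and you say so yourself ("the main obstacle I anticipate is the bookkeeping"). You correctly identify that $Q \mid h_1\varphi(f)$ for all $f\in\P^1_Q$, and you sense that the "extra" linear cofactor should vary injectively, but you never close the argument; your final appeal to "injectivity follows from $\gcd(f,g)=1$ together with \eqref{eq:maincong} applied to $f-\lambda g$" is a gesture at linearity without a completed proof. The move that makes all your anticipated bookkeeping evaporate is to reuse, verbatim, the same linearity trick you used for the first part, but applied to $h_1\varphi$ instead of to the polynomials themselves: since $h_1\varphi$ is linear and $s_1, s_2$ are $\algclosure$-linear combinations of $f$ and $g$, any common divisor $h$ of $h_1\varphi(f)$ and $h_1\varphi(g)$ also divides $h_1\varphi(s_1)$ and $h_1\varphi(s_2)$. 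These are explicit degree-$\le 3$ polynomials, namely $h_1\varphi(s_1) = (x-a_1)(h_0 - b_1^q h_1)$ and $h_1\varphi(s_2) = (x-a_2)(h_0 - b_2^q h_1)$, and since $h_0 - b_1^q h_1$ and $h_0 - b_2^q h_1$ are coprime (their difference is $(b_2^q - b_1^q)h_1$, with $b_1\neq b_2$ and $\gcd(h_0,h_1)=1$), the divisor $h$ must divide $Q=(x-a_1)(x-a_2)$. There is no need to chase roots at infinity, roots of $h_1$, or any degree bound; the statement falls out of evaluating $h_1\varphi$ at the two canonical points of $\P^1_Q\cap S$.
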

\begin{proof}
Let $s_1$ and $s_2$ be as in Proposition~\ref{prop:distinctroots}. They have no common root.
Since $f$ and $g$ are distinct, all the polynomials of $\P^1_Q$ are of the form $\alpha f + \beta g$ for $(\alpha:\beta) \in \P^1$. Then, if $r$ is a root of $f$ and $g$, $r$ is a root of all the polynomials of $\P^1_Q$. In particular, it is a root of both $s_1$ and $s_2$, a contradiction. This shows that $\gcd(f,g) = 1$.

Similarly, if a polynomial $h$ divides $h_1\varphi(f)$ and $h_1\varphi(g)$, it must also divide both
$h_1\varphi(s_1) = (x-a_1)(h_0 - b_1^qh_1) \text{, and }
h_1\varphi(s_2) = (x-a_2)(h_0 - b_2^qh_1).$
Since $h_0 - b_1^qh_1$ and $h_0 - b_2^qh_1$ are coprime, $h$ must divide $Q$.
\end{proof}

\subsection*{Proof of Proposition~\ref{prop:onestepdescent}} \label{sec:proofprop}
As discussed in Section~\ref{sec:overview}, it is sufficient to prove that a uniformly random element of $\P^1_Q(k)$ has a good probability to lead to an elimination into good polynomials.
A polynomial $f \in \P^1_Q(k)$ leads to an elimination into good polynomials if $f$ splits completely over $k$ into good linear polynomials, and $\varphi(f)$ is itself a good polynomial.

Let $A$ be the set of polynomials of $\P^1_Q(k)$ that split completely over $k$.
From Theorem~\ref{thm:splitpolyonp}, $A$ contains at least $q^{dm-3}/2$ elements.
Trap roots $\tau$ occurring in $A$ or $\varphi(A)$ must be roots of $h_1x^q - h_0$, or of $h_1x^{q^{dn+1}}-h_0$ for $n \mid m/2$, or satisfy $\frac{h_0}{h_1}(\tau) \in \F_{q^{dm/2}}$. There are at most $q^{\frac{dm}{2} + 3}$ such trap roots. From Proposition~\ref{prop:coprimality}, any trap root can only occur once in $A$ and in $\varphi(A)$.
So there are at most $2q^{\frac{dm}{2} + 3}$ polynomials in $A$ for which trap roots appear.
Therefore the number of elements in $A$ leading to a good reduction is at least $$\frac{1}{2}q^{dm-3} - 2q^{\frac{dm}{2} + 3} \geq \frac{1}{2}\left(q^{dm-3} - 4q^{dm - 8}\right) \geq \frac 1 4 q^{dm-3},$$
using $dm \geq 23$. Since $\P^1_Q(k)$ contains $q^{dm}+1$ elements, the probability of a random element to lead to a good elimination is $1/O(q^3)$.\qed

\section*{Acknowledgements}
Part of this work was supported by the Swiss National Science Foundation
under grant number 200021-156420.

\bibliographystyle{amsalpha}
\bibliography{biblio}

\end{document}